\newtheorem{thm}{Theorem} 
\newtheorem{lemma}[thm]{Lemma}
\newtheorem{defn}[thm]{Definition}
\newtheorem{claim}[thm]{Claim}
\newtheorem{prop}[thm]{Proposition}
\newtheorem*{remark}{Remark}
\newcommand{\A}{\mathcal{A}}
\newcommand{\B}{\mathcal{B}}
\newcommand{\h}{\mathcal{H}}
\title{
	Maximum size intersecting families of bounded minimum positive co-degree
}
\begin{document}

\author{
J\'ozsef Balogh\thanks{Department of Mathematics, University of Illinois at Urbana-Champaign, IL, 
USA, and MIPT, Russian Federation. Email: \texttt{jobal@illinois.edu}.
Partially supported by NSF Grant DMS-1764123 and Arnold O. Beckman Research Award (UIUC) Campus Research Board 18132, Simons Fellowship 
and the Langan Scholar Fund (UIUC).}
\qquad
Nathan Lemons\thanks{Theoretical Division, Los Alamos National Laboratory, Email: \texttt{nlemons@lanl.gov}.}
\qquad
Cory Palmer\thanks{Department of Mathematical Sciences, University of Montana. Email: \texttt{cory.palmer@umontana.edu}.
Research supported by a grant from the Simons Foundation \#712036.}

}
\date{
\today}
\maketitle

\begin{abstract}
 Let $\mathcal{H}$ be an $r$-uniform hypergraph. The \emph{minimum positive co-degree} of $\mathcal{H}$, denoted by $\delta_{r-1}^+(\mathcal{H})$, is the minimum $k$ such that if $S$ is an $(r-1)$-set contained in a hyperedge of $\mathcal{H}$, then $S$ is contained in at least $k$ hyperedges of $\mathcal{H}$. For $r\geq k$ fixed and $n$ sufficiently large, we determine the maximum possible size of an intersecting $r$-uniform $n$-vertex hypergraph with minimum positive co-degree $\delta_{r-1}^+(\mathcal{H}) \geq k$ and characterize the unique hypergraph attaining this maximum. This generalizes the Erd\H os-Ko-Rado theorem which corresponds to the case $k=1$.  Our proof is based on the delta-system method. 
\end{abstract}

\section{Introduction}

A hypergraph $\mathcal{H}$ is {\it intersecting} if for every pair of hyperedges $h,h' \in E(\mathcal{H})$ we have $h \cap h' \neq \emptyset$. The celebrated theorem of Erd\H os, Ko and Rado~\cite{EKR} gives that for $n \geq 2r$, the maximum size of an intersecting $r$-uniform $n$-vertex hypergraph is $\binom{n-1}{r-1}$. 
The Erd\H os-Ko-Rado theorem is a cornerstone of extremal combinatorics and has many proofs, extensions and generalizations, see the excellent survey of Frankl and Tokushige~\cite{FT} for a history of extremal problems for intersecting hypergraphs.
We call the unique hypergraph achieving the maximum in the Erd\H os-Ko-Rado theorem a {\it maximal star}, i.e., the hypergraph of all hyperedges containing a given vertex.

The {\it degree} of a set of vertices $S$ in a hypergraph $\mathcal{H}$ is the number of hyperedges containing $S$, i.e., $|\{ h \in E(\mathcal{H}) : S \subseteq h\}|$. 
Denote by $\delta_s(\h)$ the minimum degree of an $s$-element subset of the vertices of $\h$. In this way, $\delta_1(\h)$ is the standard minimum degree of a vertex in $\h$.

Huang and Zhao~\cite{HZ} considered a minimum degree version of the Erd\H os-Ko-Rado theorem. In particular, they proved that for $n \geq 2r+1$, if $\mathcal{H}$ is an intersecting $r$-uniform $n$-vertex hypergraph, then $\mathcal{H}$ has minimum degree  $\delta_1(\h) \leq \binom{n-2}{r-2}$. The Huang-Zhao~\cite{HZ} proof uses the linear algebra method and later a combinatorial proof was given by Frankl and Tokushige~\cite{FT2} for $n \geq 3r$.
Kupavskii~\cite{Ku1} gave an extension of this result and showed that for $t <r$ and $n \geq 2r+3t/(1-t/r)$, every intersecting $r$-uniform $n$-vertex hypergraph $\mathcal{H}$ satisfies $\delta_t(\mathcal{H}) \leq \binom{n-t-1}{r-t-1}$. 

In the more general hypergraph setting, Mubayi and Zhao~\cite{MZ} introduced the notion of co-degree Tur\'an numbers, i.e., the maximum possible value of $\delta_{r-1}(\mathcal{H})$ among all $r$-uniform $n$-vertex hypergraphs $\h$ not containing a specified subhypergraph $\mathcal{F}$. In their paper they give several results that show that the co-degree extremal problem behaves differently from the classical Tur\'an problem.

Motivated by the degree versions of the Erd\H os-Ko-Rado theorem and co-degree Tur\'an numbers we propose studying the following  hypergraph degree condition.

\begin{defn}
    Let $\mathcal{H}$ be a non-empty $r$-uniform hypergraph. The \emph{minimum positive co-degree} of $\mathcal{H}$, denoted $\delta_{r-1}^+(\mathcal{H})$, is the maximum $k$ such that if $S$ is an $(r-1)$-set contained in a hyperedge of $\mathcal{H}$, then $S$ is contained in at least $k$ distinct hyperedges of $\mathcal{H}$. 
\end{defn}

Note that the empty hypergraph is a degenerate case; for simplicity we define its positive co-degree to be zero.

As an example, let us examine hypergraphs that contain no $F_5 = \{abc,abd,cde\}$ to compare the co-degree and positive co-degree settings. Frankl and F\" uredi~\cite{FrFu} (see~\cite{KM} for a strengthening) showed that the complete balanced tripartite $3$-uniform hypergraph has the maximum number of hyperedges among all $3$-uniform $n$-vertex $F_5$-free hypergraphs, for $n$ sufficiently large. This construction has minimum co-degree $0$ and it is easy to see that minimum co-degree at least $2$ guarantees the existence of an $F_5$. On the other hand, the balanced tripartite hypergraph is $F_5$-free and has minimum positive co-degree $n/3$ and it can be shown that minimum positive co-degree strictly greater than $n/3$ implies the existence of an $F_5$.

Note that for ordinary graphs (i.e.\ 2-uniform hypergraphs), the minimum positive co-degree is simply the minimum degree of the non-isolated vertices, which in many extremal problems we may assume is equal to the minimum degree. This suggests positive co-degree as a reasonable notion of ``minimum degree'' in a hypergraph.

The positive co-degree condition has appeared in several other contexts. For example, in~\cite{KMV} the term {\it $d$-full} was used and the authors gave some simple lemmas for hypergraphs with minimum positive co-degree (in the course of proving theorems about extremal numbers for hypergraphs).

In this paper we investigate the maximum size of an intersecting $r$-uniform $n$-vertex hypergraph with positive co-degree at least $k$. As the condition $\delta_{r-1}^+(\mathcal{H}) \geq 1$ is vacuous, the maximum in this case is $\binom{n-1}{r-1}$ as given by the Erd\H os-Ko-Rado theorem.  The unique construction achieving this bound has minimum positive co-degree 1.  On the other hand, as shown in Proposition~\ref{codegree-uniformity-bound}, in an intersecting hypergraph the uniformity gives an upper bound on the minimum positive co-degree, i.e., $r \ge k$. Thus the range of interest for our problem is $2\leq k \leq r$.  In this range we prove that for $n$ sufficiently large the maximum-size intersecting hypergraph with minimum positive co-degree $k$ is given by the following hypergraph.

\begin{defn}
Given integers $r \geq k\geq 1$ an \emph{($r$-uniform) $k$-kernel system} is a hypergraph $\mathcal{H}$ on vertex set $V$ with edges $\mathcal{E}=\{E\in \binom{V}{r}: |E\cap X|\geq k\}$, were $X$ is a distinguished subset of $V$ of size $2k-1$.  The set $X$ is called the \emph{kernel} of $\mathcal{H}$.
\end{defn}

Clearly a $k$-kernel system is intersecting.
Observe that the number of hyperedges in an $r$-uniform $n$-vertex $k$-kernel system $\h$ is 
\[
|E(\h)| = \sum_{i=k}^{\max\{r, 2k-1\}} \binom{2k-1}{i} \binom{n-2k+1}{r-i} \geq \binom{2k-1}{k} \binom{n-2k+1}{r-k}= \Omega(n^{r-k}).
\]

Note that a $1$-kernel system is the hypergraph consisting of all hyperedges containing a fixed vertex $x$, i.e., the maximal hypergraph in the Erd\H os-Ko-Rado theorem. Interestingly, $k$-kernel systems appear as solutions to  maximum degree versions of the Erd\H os-Ko-Rado theorem. Let us give three examples. 

First, a special case of a more general theorem of Frankl~\cite{Fr2} implies that if $\mathcal{H}$ is a maximum-size intersecting $r$-uniform $n$-vertex hypergraph with maximum degree at most $2\binom{n-3}{r-2} + \binom{n-3}{r-3}$, then $\h$ is a $2$-kernel system, provided $n$ is large enough. 

Second, Erd\H os, Rothschild and Szemer\'edi (see~\cite{ERS}) posed the following problem: determine the maximum size of an intersecting $r$-uniform $n$-vertex hypergraph $\mathcal{H}$ such that each vertex contained in at most $c|E(\mathcal{H})|$ hyperedges for $r \geq 3$ and $0 < c <1$. They proved when $c=2/3$ and $n$ large, then a $2$-kernel system is the unique hypergraph attaining this maximum. Frankl~\cite{Fr3} showed that for $2/3 \leq c <1$ and $n$ large enough, $\h$ has no more hyperedges than a $2$-kernel system. 
For $3/5< c < 2/3$ and $n$ large enough, F\"uredi~\cite{Fr3} showed that a $3$-kernel system is one of six non-isomorphic hypergraphs attaining this maximum. 
In the case when $1/2< c \leq 3/5$ and $n$ large enough, Frankl~\cite{Fr3} showed that $\h$ has no more hyperedges than a $3$-kernel system, although the unique hypergraph attaining this maximum is not isomorphic to a $3$-kernel system.

Third, Lemons and Palmer~\cite{LP} proved that $3$-kernel systems are the $r$-uniform $n$-vertex hypergraphs with the largest {\it diversity}, i.e., the difference between the number of hyperedges and the maximum degree
for $n$ large enough (see~\cite{FR,KU} for improvements to the threshold on $n$).

The main result of our paper is as follows:

 \begin{thm}\label{general-k-thm}
    Let $\mathcal{H}$ be an intersecting $r$-uniform $n$-vertex hypergraph with minimum positive co-degree $\delta_{r-1}^+(\mathcal{H}) \geq k$ where $1 \leq k \leq r$. If $\mathcal{H}$ has the maximum number of hyperedges, then $\mathcal{H}$ is a $k$-kernel system for $n$ sufficiently large.
\end{thm}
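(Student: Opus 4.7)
Since the case $k=1$ is the Erd\H os--Ko--Rado theorem itself (the $1$-kernel system coinciding with the maximal star), I restrict to $2 \le k \le r$. The plan is to use the delta-system method in the spirit of Frankl and F\"uredi. We may assume $|\mathcal{H}| \ge \binom{2k-1}{k}\binom{n-2k+1}{r-k} = \Omega(n^{r-k})$, and for large $n$ a standard sunflower/kernel argument then produces a collection $\mathcal{K}$ of $k$-subsets of $V(\mathcal{H})$ --- the \emph{popular kernels} --- and a constant $c = c(r,k) > 0$ such that every $K \in \mathcal{K}$ is contained in at least $c\, n^{r-k}$ edges, and all but $o(n^{r-k})$ edges of $\mathcal{H}$ contain some $K \in \mathcal{K}$.

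The first structural observation is that $\mathcal{K}$ is itself an intersecting family: two disjoint popular kernels would each admit $\Theta(n^{r-k})$ extensions to edges, with plenty of room to produce a disjoint pair of edges and contradict intersectivity of $\mathcal{H}$. Next I would exclude the star case. If $\mathcal{K}\subseteq \{K : v \in K\}$ for some fixed $v$, then almost every edge of $\mathcal{H}$ contains $v$. Fixing any edge $E \ni v$ and setting $S = E \setminus\{v\}$, almost no choice of $w \ne v$ yields an edge $S \cup \{w\} \in \mathcal{H}$ (since that edge would miss $v$), so for large $n$ we get $d_\mathcal{H}(S) < k$, contradicting $\delta_{r-1}^+(\mathcal{H}) \ge k \ge 2$.

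The main obstacle is the next step: showing $\mathcal{K} = \binom{X}{k}$ for some $(2k-1)$-set $X$. An intersecting non-star $k$-uniform family can be vastly larger than $\binom{2k-1}{k}$ (cf.\ Hilton--Milner), so intersectivity alone is not enough; the positive co-degree hypothesis must be exploited a second time. I would run a finer delta-system analysis at the level of $(k-1)$-subsets of popular kernels: for each popular kernel $K$ and each $(k-1)$-subset $S \subset K$, any $(r-1)$-set $T \supseteq S$ lying in an edge has $\ge k$ edge-extensions by $\delta_{r-1}^+ \ge k$, and a counting argument shows that most such extensions must themselves be covered by a popular kernel through $S$. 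This gives a ``kernel-level minimum degree'' condition on $\mathcal{K}$, and combining it with the intersectivity of $\mathcal{K}$ should pin $\mathcal{K}$ down to exactly $\binom{X}{k}$, with $X = \bigcup \mathcal{K}$ of size $2k-1$.

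Once $X$ is identified, the remainder is routine. If some edge $E \in \mathcal{H}$ satisfied $|E \cap X| \le k-1$, then $|X \setminus E| \ge k$, so any $K \in \binom{X \setminus E}{k}$ is a popular kernel disjoint from $E$; its $\Theta(n^{r-k})$ extensions easily contain one disjoint from $E$, contradicting intersectivity. Hence $|E \cap X| \ge k$ for every $E \in \mathcal{H}$, so $\mathcal{H}$ is a subhypergraph of the $k$-kernel system on $X$. A direct check shows that the full $k$-kernel system is intersecting and satisfies $\delta_{r-1}^+ = k$, so maximality of $|\mathcal{H}|$ forces $\mathcal{H}$ to equal it.
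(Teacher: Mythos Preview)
Your overall architecture---find $k$-element ``kernels'' via the delta-system method, show they sit inside a $(2k-1)$-set $X$, then conclude every edge meets $X$ in at least $k$ vertices---is reasonable and close in spirit to the paper, but the execution diverges at exactly the point you flag as the ``main obstacle,'' and there the proposal has a genuine gap.

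You want to show $\mathcal{K}=\binom{X}{k}$ by establishing a ``kernel-level minimum degree'' condition and combining it with intersectivity. Two problems. First, your mechanism for the degree condition is not justified: given a $(k-1)$-set $S\subset K\in\mathcal{K}$ and an $(r-1)$-set $T\supseteq S$ with $k$ extensions $T\cup\{w\}$, you assert that most of these are covered by a popular kernel \emph{through $S$}. But a popular kernel $K'\subset T\cup\{w\}$ need not contain $S$ at all; when $r\ge 2k-1$ there is plenty of room for $K'\subseteq (T\setminus S)\cup\{w\}$. Second, even if you could prove that $\mathcal{K}$ is an intersecting $k$-uniform family with $\delta_{k-1}^+(\mathcal{K})\ge k$, you would still need to show that this forces $\mathcal{K}\subseteq\binom{X}{k}$ for some $(2k-1)$-set $X$. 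That statement may well be true, but it is itself a structural result requiring proof; ``should pin $\mathcal{K}$ down'' is not an argument. (A minor additional issue: your star-case exclusion needs a double count over all $(r-1)$-sets $E\setminus\{v\}$, not a claim about a single one.)

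The paper avoids this difficulty entirely by \emph{constructing} the $(2k-1)$-set directly rather than characterising the family of all popular kernels. It first uses Lemma~\ref{k-core} together with Lemma~\ref{sunflower-transversal} to obtain one sunflower $\mathcal S$ with $(r+1)r^{k-1}$ petals and core $Y$ of size exactly $k$. Then, by an inductive vertex-swapping argument (Claim~\ref{sunflower-pair}) driven by the positive co-degree condition, it builds a $(k-1)$-set $Z$ disjoint from $Y$ such that $Z\cup\{y_k\}$ is also the core of a sunflower with $r+1$ petals. A further induction on $|T\cap Z|$ (Claim~\ref{technical-claim}) then shows that \emph{every} $k$-subset $T\subset Y\cup Z$ extends along the petals of $\mathcal S_Y$ to edges of $\mathcal H$, which is exactly what is needed to force $|h\cap(Y\cup Z)|\ge k$ for all $h\in E(\mathcal H)$. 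This constructive route is the substantive idea you are missing.
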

Theorem~\ref{general-k-thm} holds for $n$ large, roughly double exponential in $r$.
In Section~\ref{small-n-section} we give two results that suggest that Theorem~\ref{general-k-thm} should hold for $n$ at least $cr^{k+2}$, where $c$ is a polynomial in $k$.
It would be interesting to further refine the range of $n$ as a function of $r$ and $k$ where our results hold. Also, we only considered the positive co-degree of $(r-1)$-sets. We can define $\delta^+_s(\mathcal{H})$ to be the minimum $k$ such that if $S$ is an $s$-set contained in a hyperedge of $\mathcal{H}$, then $S$ is contained in at least $k$ distinct hyperedges. There may be interesting problems to be considered under this more general condition. 

\section{Proof of Theorem~\ref{general-k-thm}}

First, let us observe that the uniformity of an intersecting hypergraph is always at least the minimum positive co-degree.

\begin{prop}\label{codegree-uniformity-bound}
    If $\mathcal{H}$ is a non-empty intersecting $r$-uniform $n$-vertex hypergraph with minimum positive co-degree $\delta^+_{r-1}(\mathcal{H}) \geq k$, then $r \geq k$.
\end{prop}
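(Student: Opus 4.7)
The plan is to argue by contradiction. Suppose $r < k$, i.e.\ $k \geq r+1$; starting from any hyperedge $h_0$ of $\mathcal{H}$, I would iteratively produce a second hyperedge that is entirely disjoint from $h_0$, contradicting the intersecting property.

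Fix $h_0 = \{v_1,\ldots,v_r\} \in E(\mathcal{H})$. The construction builds a sequence of hyperedges $h_0, h_1, \ldots, h_r$ of the form
\[
h_i = \{v_{i+1},\ldots,v_r,u_1,\ldots,u_i\},
\]
where each $u_j$ will be a ``fresh'' vertex lying outside $h_0$. At step $i+1$, the $(r-1)$-set $S_{i+1} := h_i \setminus \{v_{i+1}\}$ is contained in the hyperedge $h_i$, so by the hypothesis $\delta_{r-1}^+(\mathcal{H}) \geq k$ there are at least $k$ vertices $w$ with $S_{i+1} \cup \{w\} \in E(\mathcal{H})$. A direct computation shows $(h_0 \cup h_i) \setminus S_{i+1} = \{v_1,\ldots,v_{i+1}\}$, a set of size $i+1$, so at most $i+1$ of the $k$ candidate extensions can lie in $h_0 \cup h_i$. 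Since $k \geq r+1 > i+1$ for every $i \leq r-1$, a fresh $u_{i+1} \notin h_0 \cup h_i$ is available, and setting $h_{i+1} = S_{i+1} \cup \{u_{i+1}\}$ preserves the inductive form. After $r$ such swaps, $h_r = \{u_1,\ldots,u_r\}$ is a hyperedge disjoint from $h_0$, yielding the contradiction.

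I do not foresee a serious obstacle here. The only care needed is the bookkeeping that at each stage the new vertex $u_{i+1}$ is both distinct from the previous $u_j$'s and outside $h_0$; this is automatic from the counting inequality $|\{w : S_{i+1}\cup\{w\}\in E(\mathcal{H})\}| \geq k > |(h_0\cup h_i)\setminus S_{i+1}|$, and the assumption $r < k$ is exactly what is needed for this inequality to persist through all $r$ rounds of the construction.
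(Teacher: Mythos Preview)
Your proposal is correct and follows essentially the same approach as the paper's proof: both assume $k>r$, start from a hyperedge $h_0$, and iteratively swap out one original vertex for a fresh vertex outside $h_0$ using the positive co-degree condition, arriving after $r$ steps at a hyperedge disjoint from $h_0$. Your write-up is slightly more explicit about the counting that guarantees a fresh vertex at each step, but the argument is the same.
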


\begin{proof}
Assume, for the sake of a contradiction, that $k > r$.
    Let $h = \{x_1,x_2,\dots, x_r\}$ be a hyperedge of $\mathcal{H}$. The $(r-1)$-set $h \setminus \{x_1\}$ has co-degree at least $k$, so there is a vertex $x_{r+1} \not \in h$ such that $\left(h \setminus \{x_1\}\right) \cup \{x_{r+1}\} $ is a hyperedge of $\mathcal{H}$. Similarly, the $(r-1)$-set $\left(h \setminus \{x_1,x_2\}\right) \cup \{x_{r+1}\}$ has co-degree at least $k$, so there is a vertex $x_{r+2} \not \in h\cup \{x_{r+1}\}$ such that  $\left(h \setminus \{x_1,x_2\}\right) \cup \{x_{r+1},x_{r+2}\}$ is a hyperedge of $\mathcal{H}$. Because $k > r$, we can repeat this process to obtain a hyperedge $\left(h \setminus \{x_1,\dots, x_r\}\right) \cup \{x_{r+1},\dots, x_{2r}\} = \{x_{r+1},\dots, x_{2r}\}$ that is in $\mathcal{H}$. Now we have disjoint hyperedges $h$ and $\{x_{r+1},\dots, x_{2r}\}$ in $\h$ which contradicts the intersecting property.
\end{proof}

An $r$-uniform hypergraph $\mathcal{S}$ is a {\it sunflower} if every pairwise intersection of the hyperedges is the same set $Y$, called the {\it core} of the sunflower. We call the sets $h \setminus Y$ for $h \in E(\mathcal{S})$ the {\it petals} of the sunflower $\mathcal{S}$. Note that the petals are pairwise disjoint. Denote the size of the core of a sunflower $\mathcal{S}$ by $c(\mathcal{S})$.


Let $f(r,p)$ denote the minimum integer such that an $r$-uniform hypergraph with $f(r,p)$ hyperedges contains a sunflower with $p$ petals. 
The Sunflower Lemma of Erd\H os and Rado~\cite{ER} claims that $f(r,p) \leq r! (p-1)^r$.
The determination of $f(r,p)$ is a well-known open problem in combinatorics. A recent breakthrough by Alweiss, Lovett, Wu and Zhang~\cite{ALWZ} gives a bound on $f(r,p)$ of about $(\log r)^{r(1+o(1))}$.

In general we cannot force a sunflower to have a core of a specified size unless we increase the number of hyperedges in the host hypergraph. Mubayi and Zhao (Lemma~6 in~\cite{MZ2}) gives conditions for the existence of a sunflower with a core of bounded size.

\begin{lemma}[Mubayi and Zhao,~\cite{MZ2}]\label{k-core}
Fix integers $r\geq 3$, $k\geq 1$ and $p \geq 1$ and let $C=C(r,p)$ be a large enough constant.
If $\mathcal{G}$ is an $r$-uniform $n$-vertex hypergraph with 
\[
|E(\mathcal{G})| \geq C n^{r-k-1},
\]
then  $\mathcal{G}$ contains a sunflower with $p$ petals and core of size at most $k$.  
\end{lemma}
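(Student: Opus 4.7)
The plan is to iteratively extract a set of $k$ vertices lying in many edges of $\mathcal{G}$, and then derive a contradiction from a matching/covering argument on the resulting link. I will argue by contradiction: assume $\mathcal{G}$ contains no sunflower with $p$ petals and core of size at most $k$, and deduce that $|E(\mathcal{G})| = O(n^{r-k-1})$ with implicit constant depending only on $r$ and $p$.

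I will greedily build vertices $v_1,\ldots,v_k$ as follows. For $i=0,1,\ldots,k$, set $T_i := \{v_1,\ldots,v_i\}$ (with $T_0 = \emptyset$) and consider the link $\mathcal{G}_i := \{E\setminus T_i : T_i \subseteq E \in E(\mathcal{G})\}$, which is $(r-i)$-uniform. The key observation is that a matching of size $p$ in $\mathcal{G}_i$ consists of $p$ pairwise disjoint edges whose pre-images in $\mathcal{G}$ intersect pairwise exactly in $T_i$; this would be a sunflower with $p$ petals and core of size $i\leq k$, contradicting the assumption. Hence the matching number of $\mathcal{G}_i$ is less than $p$, so by maximality the vertex set of a maximal matching is a vertex cover of $\mathcal{G}_i$ of size at most $(p-1)(r-i)$. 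For $i<k$, pigeonhole produces a vertex $v_{i+1}$ in this cover lying in at least $|\mathcal{G}_i|/((p-1)(r-i))$ edges of $\mathcal{G}_i$; setting $T_{i+1} := T_i\cup\{v_{i+1}\}$ gives $|\mathcal{G}_{i+1}| \geq |\mathcal{G}_i|/((p-1)(r-i))$.

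After $k$ iterations, $|\mathcal{G}_k| \geq |E(\mathcal{G})|/\bigl((p-1)^k \cdot r(r-1)\cdots(r-k+1)\bigr)$, while the same matching-to-cover argument applied one more time to $\mathcal{G}_k$ itself yields $|\mathcal{G}_k| \leq (p-1)(r-k)\binom{n-1}{r-k-1}$. Combining the two bounds,
\[
|E(\mathcal{G})| \leq (p-1)^{k+1}(r-k)\,\frac{r!}{(r-k)!}\binom{n-1}{r-k-1},
\]
which is $O(n^{r-k-1})$ with an explicit constant depending only on $r$ and $p$. Choosing $C$ larger than this constant contradicts the hypothesis $|E(\mathcal{G})|\geq Cn^{r-k-1}$, and the proof is complete.

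The only substantive step is the ``maximal matching gives a small vertex cover'' fact, which is immediate from maximality: any edge avoiding the matched vertices could be added to the matching. Given this observation, the iteration is mechanical and the obstacle is really just bookkeeping — tracking how the lower bound on $|\mathcal{G}_i|$ degrades at each step so that it still beats the final upper bound.
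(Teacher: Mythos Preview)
Your proof is correct. The paper itself does not prove this lemma directly---it cites Mubayi--Zhao \cite{MZ2}, whose argument (via a theorem of F\"uredi) yields $C=(pr2^r)^{2^r}$---but in the Appendix the authors give their own proof of a sharpened version (Lemma~\ref{k-core-new}). That argument is quite different from yours: it iteratively removes sunflowers chosen with minimum available core size, collects the cores into an auxiliary $s$-uniform system, and then applies the Erd\H{o}s--Rado Sunflower Lemma to those cores to produce a sunflower with strictly smaller core, contradicting minimality. Your argument bypasses the Sunflower Lemma entirely by observing that a matching of size $p$ in the link $\mathcal{G}_i$ already \emph{is} a sunflower in $\mathcal{G}$ with core $T_i$, reducing everything to the elementary fact that bounded matching number implies a small vertex cover. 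The resulting constant, of order $(p-1)^{k+1}r!/((r-k-1)!)^2$, is single-exponential in $r$ and in fact smaller than the Appendix bound $2r^{r-k}f(r,pr^{r-k})$ for the parameter choice $p=(r+1)r^{k-1}$ used in Theorem~\ref{general-k-thm}; the trade-off is that the paper's route makes the dependence on $f(r,\cdot)$ explicit and so inherits any future improvement to the Sunflower Lemma automatically.
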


Observe that Lemma~\ref{k-core} is sharp in the order of magnitude of $n$. Indeed, the $r$-uniform $n$-vertex hypergraph consisting of all hyperedges containing a fixed set $Y$ of $k+1$ vertices contains $\binom{n-k-1}{r-k-1}$ hyperedges, but no sunflower with a core of size at most $k$ as any two hyperedges intersect in at least $k+1$ vertices.
We remark that the problem to determine the best constant $C$ in Lemma~\ref{k-core} is interesting in its own right. In the Appendix at the end of the paper we give a new proof of Lemma~\ref{k-core} that gives an improvement to $C$.

We will need a lower bound on the size of a core of a sunflower in an intersecting hypergraph.

\begin{lemma}\label{sunflower-transversal}
If $\mathcal{S}$ is a sunflower with at least $r+1$ petals in an intersecting $r$-uniform hypergraph $\mathcal{G}$ with $\delta_{r-1}^+(\mathcal{G})\geq k$, then the core $Y$ of $\mathcal{S}$ satisfies $|Y|\geq k$.
\end{lemma}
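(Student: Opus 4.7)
The plan is to argue by contradiction, assuming $|Y|=t\leq k-1$. The strategy has two parts: first I would verify that $Y$ is a transversal of $\mathcal{G}$, and then I would exhibit a hyperedge of $\mathcal{G}$ that avoids $Y$, contradicting the first part.

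For the transversal step, suppose some $e\in E(\mathcal{G})$ were disjoint from $Y$. Then the intersecting condition forces $e$ to meet every hyperedge of $\mathcal{S}$, and since $e$ avoids the core $Y$, it must hit every petal. The petals are pairwise disjoint and there are at least $r+1$ of them, so $|e|\geq r+1$, contradicting $r$-uniformity.

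For the construction, I would start from a hyperedge $h_0$ of $\mathcal{G}$ that contains $Y$ (any hyperedge of $\mathcal{S}$ works) and iteratively peel vertices of $Y=\{y_1,\dots,y_t\}$ out of it. Inductively, given $h_{i-1}$ with $h_{i-1}\cap Y=\{y_i,\dots,y_t\}$, consider the $(r-1)$-set $S_i=h_{i-1}\setminus\{y_i\}$, which lies in $h_{i-1}$. The positive co-degree hypothesis supplies at least $k$ distinct extension vertices $v\notin S_i$ with $S_i\cup\{v\}\in E(\mathcal{G})$. The only such $v$ that can land in $Y$ are those in $Y\setminus S_i=\{y_1,\dots,y_i\}$, a set of size $i\leq t\leq k-1<k$, so some extension $z_i\notin Y$ must exist. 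Setting $h_i:=S_i\cup\{z_i\}$ maintains the invariant. After $t$ steps, $h_t$ is a hyperedge with $h_t\cap Y=\emptyset$, contradicting the transversal step.

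The main point to be careful about is the counting in the iterative step: we must verify that at each stage, fewer than $k$ of the guaranteed extensions can fall inside $Y$, which is exactly what the inequality $i\leq k-1$ provides. Apart from this, the argument is a straightforward unwinding of the definitions; the hypothesis that $\mathcal{S}$ has at least $r+1$ petals is used only to force $Y$ to be a transversal, and the positive co-degree hypothesis is used only in the iterative replacement.
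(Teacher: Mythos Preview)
Your proof is correct, but it follows a different route from the paper's. Both arguments begin identically by showing that $Y$ is a transversal of $\mathcal{G}$ via the pigeonhole on $r+1$ disjoint petals. After that, the paper passes to a \emph{minimum} transversal $Y'\subseteq V(\mathcal{G})$ with $|Y'|\leq|Y|<k$; minimality guarantees a hyperedge $h$ with $|h\cap Y'|=1$, and then a single application of the positive co-degree condition to the $(r-1)$-set $h\setminus Y'$ finishes, since every extension must lie in $Y'$ and $|Y'|<k$. Your argument instead stays with $Y$ itself and runs an iterative vertex-replacement (in the spirit of the paper's proof of Proposition~\ref{codegree-uniformity-bound}), peeling off the elements of $Y$ one at a time. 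The paper's approach is slightly slicker---one invocation of $\delta_{r-1}^+$ rather than $t$---while yours is more self-contained, avoiding the detour through a minimum transversal. Both are short and the difference is largely cosmetic.
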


\begin{proof}
For the sake of contradiction, assume that the core $Y$ of $\mathcal{S}$ is small, i.e., $|Y|<k$.
Observe that $Y$ is a transversal of $\mathcal{G}$, i.e., every hyperedge of $\mathcal{G}$ intersects $Y$. Indeed, 
as the petals of the sunflower $\mathcal{S}$ are pairwise vertex-disjoint, each hyperedge of $\mathcal{G}$ must intersect the core $Y$ in order to intersect each of the at least $r+1$ hyperedges associated with the petals of the sunflower.

Now let $Y'$ be a minimum transversal in $\mathcal{G}$. Thus $|Y'| \leq |Y| < k$ and the minimality of $Y'$ guarantees the existence of a hyperedge $h$ that intersects $Y'$ in exactly one element.
 The $(r-1)$-set $h \setminus Y'$ is contained in at most $k-1$ hyperedges of $\mathcal{G}$; one for each element of $Y'$. This contradicts the positive co-degree condition on $\mathcal{G}$.
\end{proof}


\begin{proof}[Proof of Theorem~\ref{general-k-thm}.]
Let $\mathcal{H}$ be an intersecting $r$-uniform $n$-vertex hypergraph with minimum positive co-degree $\delta_{r-1}^+(\mathcal{H}) \geq k$ where $1 \leq k \leq r$. Moreover, suppose that $\h$ has the maximum number of hyperedges. We will show that $\h$ is a $k$-kernel system for $n$ sufficiently large.

We have observed that a $k$-kernel system has minimum positive co-degree at least $k$, so we may assume that
\[
|E(\mathcal{H})| \geq \binom{2k-1}{k} \binom{n-2k+1}{r-k} = \Omega(n^{r-k}).
\]
Therefore, for $n$ large enough, Lemmas~\ref{k-core} and~\ref{sunflower-transversal} guarantees the existence of a sunflower $\mathcal{S}$ with 
$p=(r+1)  r^{k-1}$ petals and core of size $k$. 
Denote the core of $\mathcal{S}$ by $Y=\{y_1,y_2,\dots, y_k\}$.

Note that in order to apply Lemma~\ref{k-core} we need that the following inequality is satisfied:
\[
\binom{2k-1}{k}\binom{n-2k+1}{r-k} \geq Cn^{r-k-1},
\]
where $C=C(r,p)$ is the constant from Lemma~\ref{k-core}. This is satisfied when
\[
n \geq  \frac{(2r-2k)^{r-k}}{\binom{2k-1}{k}} C. 
\]
The value $C = (pr2^r)^{2^r}$ given in \cite{MZ2} follows from a 
theorem of F\"uredi \cite{Fu3}. 


\begin{claim}\label{sunflower-pair}
There is a set of vertices $Z = \{z_1,z_2,\dots, z_{k-1}\}$ such that $Z \cap Y = \emptyset$ and $Z \cup \{y_k\}$ is the core of a sunflower with $r+1$ petals.
\end{claim}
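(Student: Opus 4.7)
The plan is to construct $Z = \{z_1, \ldots, z_{k-1}\}$ iteratively by replacing $y_i$ with some $z_i \notin Y$ for $i = 1, \ldots, k-1$ in turn, while maintaining a shrinking subcollection $J_i \subseteq \{1, \ldots, p\}$ of petals of the given sunflower $\mathcal{S}$. Starting from $J_0 = \{1, \ldots, p\}$ with $p = (r+1) r^{k-1}$, the target is to end with $|J_{k-1}| \geq r + 1$ indices $j$ such that $\{y_k\} \cup P_j \cup Z$ is a hyperedge of $\mathcal{H}$; these hyperedges together with the common set $\{y_k\} \cup Z$ yield the desired sunflower.

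For the inductive step from $J_{i-1}$ (with $|J_{i-1}| \geq p/r^{i-1}$ and common vertices $z_1, \ldots, z_{i-1} \notin Y$) to $J_i$, the invariant is that $F_j^{i-1} := \{y_i, \ldots, y_k\} \cup P_j \cup \{z_1, \ldots, z_{i-1}\} \in \mathcal{H}$ for every $j \in J_{i-1}$. Removing $y_i$ produces an $(r-1)$-set in the shadow of $\mathcal{H}$, and by the minimum positive co-degree condition this set extends to at least $k$ hyperedges of $\mathcal{H}$; excluding the extension $v = y_i$ itself and the at most $i-1$ other extensions that lie in $\{y_1, \ldots, y_{i-1}\}$ leaves at least $k - i$ valid extensions $v \notin Y$. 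To make the pigeonhole tractable, I restrict these extensions to a bounded set via a reference hyperedge: by Lemma~\ref{sunflower-transversal}, a minimum transversal of $\mathcal{H}$ has size at least $k$, so the $(k-1)$-set $\{y_{i+1}, \ldots, y_k\} \cup \{z_1, \ldots, z_{i-1}\}$ is not a transversal, and there exists $h_i^* \in \mathcal{H}$ disjoint from it. Since the petals are pairwise disjoint and $|h_i^*| = r$, at most $r$ petals $P_j$ meet $h_i^*$; for every other $j \in J_{i-1}$, the intersecting condition applied to $F_j^{i-1} \setminus \{y_i\} \cup \{v\}$ and $h_i^*$ forces $v \in h_i^*$. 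Pigeon-holing the resulting valid extensions over the $\leq r$ vertices of $h_i^* \setminus Y$ then yields a popular $z_i$ that is valid for at least $|J_{i-1}|/r$ petals; take $J_i$ to be this popular subcollection.

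The main technical obstacle is the pigeonhole step in the last iteration ($i = k-1$), where only a single valid extension per petal is guaranteed and the quantitative bounds are tight; it may require a careful choice of $h_{k-1}^*$ (for instance, arranging $h_{k-1}^*$ to contain as many of $\{y_1, \ldots, y_{k-1}\}$ as possible to further shrink the range of $v$) or a second reference hyperedge to close the estimate exactly at $p = (r+1)r^{k-1}$. Once the iteration succeeds, each chosen $z_i$ lies in $h_i^*$ but outside $P_j$ for every $j \in J_i \supseteq J_{k-1}$, so $Z \cap P_j = \emptyset$ for $j \in J_{k-1}$; combined with the pairwise disjointness of the $P_j$'s and $Z \cap Y = \emptyset$, the hyperedges $\{y_k\} \cup P_j \cup Z$ for $j \in J_{k-1}$ pairwise intersect in exactly $\{y_k\} \cup Z$, producing the required sunflower with $r+1$ petals.
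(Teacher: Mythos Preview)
Your overall strategy---iteratively replacing $y_1,\ldots,y_{k-1}$ by new vertices $z_1,\ldots,z_{k-1}$ while shrinking the collection of usable petals by a factor of $r$ at each step---is exactly the induction the paper carries out. The difference, and the gap you yourself flag, is in how you bound the range of candidate extensions at each step. Your reference-hyperedge device forces every valid extension $v$ into $h_i^*$, giving at most $r$ bins for the pigeonhole, but it first costs you up to $r$ petals (those meeting $h_i^*$). Thus you obtain only $|J_i| \ge \lceil (|J_{i-1}|-r)/r\rceil$; iterating from $|J_0|=(r+1)r^{k-1}$ yields $|J_{k-1}|\ge r$, not $r+1$. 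The bound is genuinely off by one, and your suggested patches (loading $h_{k-1}^*$ with vertices of $Y$, or adding a second reference edge) do not obviously recover the missing petal.

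The paper avoids this loss by a different mechanism that you are close to but do not use. For each petal $P$ of $\mathcal S_{i-1}$ pick one extension $x(P)\notin\{y_1,\ldots,y_i\}$ (automatically $x(P)\notin Y\cup Z_{i-1}\cup P$). If the map $P\mapsto x(P)$ attained $r+1$ distinct values on $r+1$ petals $P_1,\ldots,P_{r+1}$, the corresponding $r+1$ hyperedges $\{x(P_j)\}\cup P_j\cup Z_{i-1}\cup\{y_{i+1},\ldots,y_k\}$ would themselves form a sunflower in $\mathcal H$ with core $Z_{i-1}\cup\{y_{i+1},\ldots,y_k\}$ of size $k-1$ and $r+1$ petals, directly contradicting Lemma~\ref{sunflower-transversal}. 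Hence at most $r$ distinct values occur among \emph{all} the $x(P)$, so pigeonhole gives $|J_i|\ge|J_{i-1}|/r$ on the nose with no petals discarded. Applying Lemma~\ref{sunflower-transversal} to the extensions themselves---rather than only using it to manufacture a reference edge---is the missing idea.
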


\begin{proof}
We will prove the following stronger claim: For $0 \leq i \leq k-1$, there is a set of vertices $Z_i=\{z_1,z_2,\dots, z_{i}\}$ such that $Y \cap Z_i = \emptyset$ and $Z_i \cup \{y_k,y_{k-1},\dots, y_{i+1}\}$ is the core of a sunflower $\mathcal{S}_i$ with $(r+1)  r^{k-1-i}$ petals. The claim follows from the case $i=k-1$.

We proceed by induction on $i$. The base case $i =0$ is immediate as $Z_0=\emptyset$ and $\mathcal{S}_0 = \mathcal{S}$ is a sunflower with core $Z_0 \cup \{y_k,y_{k-1},\dots, y_1\} = Y$ with $(r+1)  r^{k-1}$ petals. 
Now suppose $i>0$ and the statement holds for $i-1$. Let $\mathcal{S}_{i-1}$ be a sunflower given by the inductive hypothesis.

For each petal $P$ in $\mathcal{S}_{i-1}$ consider the $(r-1)$-set $P\cup Z_{i-1} \cup \{y_k,\dots, y_{i+1}\} = P\cup Z_{i-1} \cup \{y_k,\dots, y_i\} \setminus \{y_{i}\}$. By the positive co-degree condition on $\mathcal{H}$, the set $P\cup Z_{i-1} \cup \{y_k,\dots, y_{i+1}\}$ is contained in $k$ hyperedges of $\mathcal{H}$. Therefore, as $i \leq k-1$, there is a vertex $x(P)$ such that $x(P) \not \in \{y_1,y_2,\dots, y_i\}$ and $\{x(P)\} \cup P\cup Z_{i-1} \cup \{y_k,\dots, y_{i+1}\}$ is a hyperedge of $\mathcal{H}$.

Now suppose there are distinct vertices $x_1,x_2,\dots, x_{r+1}$ among the vertices in $\{x(P) : P \text{ is a petal in } \mathcal{S}\}$. Let $P_1,P_2,\dots, P_{r+1}$ be the petals corresponding to these vertices, i.e., $\{x_j\} \cup P_j \cup Z_{i-1} \cup \{y_k,\dots, y_{i+1}\} \in E(\mathcal{H})$ for $j=1,2,\dots, r+1$. Then $Z_{i-1} \cup \{y_k,\dots, y_{i+1}\}$ is the core of size $k-1$ of a sunflower with petals $P_j \cup \{x_j\}$ for $j=1,2,\dots, r+1$ in $\mathcal{H}$. This contradicts Lemma~\ref{sunflower-transversal}. Therefore, there are at most $r$ distinct vertices among the vertices in $\{x(P) : P \text{ is a petal in } \mathcal{S}_{i-1}\}$. This implies that there is a vertex $x$ that is the vertex $x(P)$ for at least $\frac{1}{r}|E(\mathcal{S}_{i-1})| \geq (r+1)  r^{k-2-(i-1)}$ petals $P$ in $\mathcal{S}_{i-1}$. Put $z_i=x$ and $Z_i = \{z_1,z_2,\dots, z_i\}$ and let $\mathcal{S}_i$ be the sunflower consisting of $(r+1)  r^{k-1-i}$ hyperedges of $\mathcal{S}_{i-1}$ containing $x = z_i$. Observe that $Z_i \cup \{y_k,\dots, y_{i+1}\}$ is the core of the sunflower $\mathcal{S}_i$ with $(r+1)  r^{k-1-i}$ petals.
\end{proof}

Let $\mathcal{S}_Z$ be a sunflower with $r+1$ petals and core $Z \cup \{y_k\}$ given by Claim~\ref{sunflower-pair}. 
There are at most $(r+1)(r-k)+(k-1)$ vertices
disjoint from $Y$ spanned by $\mathcal{S}_Z$.
As $\mathcal{S}$ has $(r+1) r^{k-1}$ petals, we may choose $r+1$ petals of $\mathcal{S}$ that are vertex-disjoint from the vertices of $\mathcal{S}_Z$. Call the resulting sunflower $\mathcal{S}_Y$. Note that $\mathcal{S}_Y$ has $r+1$ petals and core $Y$.

\begin{claim}\label{kernel-claim}
For every petal $P$ in $\mathcal{S}_Z$ and every $y \in Y$ we have that $P \cup Z \cup \{y\}$ is a hyperedge in $\mathcal{H}$.
\end{claim}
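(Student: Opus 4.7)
The plan is to combine the positive co-degree hypothesis with the intersecting property tested against the pairwise-disjoint petals of $\mathcal{S}_Y$. The positive co-degree condition will produce at least $k$ candidate ``extension'' vertices $v$ for which $P \cup Z \cup \{v\}$ is a hyperedge, and the intersecting condition applied to $\mathcal{S}_Y$ will force every such $v$ to lie inside $Y$; since $|Y|=k$, these $k$ extensions must then be exactly the elements of $Y$.

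First I would verify that $P \cup Z$ is an $(r-1)$-set sitting inside a hyperedge: because $P$ is a petal of the sunflower $\mathcal{S}_Z$ whose core has size $k$, we have $|P|=r-k$ and $P \cap Z = \emptyset$, so $|P\cup Z|=r-1$, and $P\cup Z \cup \{y_k\}\in E(\mathcal{S}_Z)\subseteq E(\mathcal{H})$. Applying $\delta_{r-1}^+(\mathcal{H})\geq k$ yields a set $T$ of at least $k$ vertices $v$ with $P\cup Z\cup\{v\}\in E(\mathcal{H})$, automatically including $y_k$. I would then fix $v\in T$, set $E_v := P\cup Z\cup\{v\}$, and test $E_v$ against each of the $r+1$ hyperedges $Y \cup Q_i$ of $\mathcal{S}_Y$. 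By the construction of $\mathcal{S}_Y$ the petals $Q_i$ avoid every vertex of $\mathcal{S}_Z$, so $P$ and $Z$ are disjoint from each $Q_i$; together with $Z \cap Y = \emptyset$ (from Claim~\ref{sunflower-pair}) and the hypothesis $P\cap Y = \emptyset$ (addressed below), the intersection $E_v \cap (Y\cup Q_i)$ collapses to $\{v\} \cap (Y\cup Q_i)$. Since the $Q_i$ are pairwise disjoint and there are $r+1\geq 2$ of them, a single vertex $v$ can lie in at most one $Q_i$, so the intersecting property forces $v\in Y$. Hence $T\subseteq Y$, and $|T|\geq k = |Y|$ gives $T=Y$, which is exactly the statement of the claim.

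The delicate point is the hypothesis $P\cap Y = \emptyset$. Since the $r+1$ petals of $\mathcal{S}_Z$ are pairwise disjoint and $|Y\setminus\{y_k\}| = k-1$, at most $k-1$ petals of $\mathcal{S}_Z$ can meet $Y$, so at least $r-k+2$ petals are disjoint from $Y$, and we may quietly pass to this sub-sunflower before invoking the claim. I expect this disjointness bookkeeping to be the main obstacle: the whole argument hinges on arranging the vertex sets so that $E_v\cap (Y\cup Q_i)$ really does reduce to $\{v\}\cap(Y\cup Q_i)$. Once that is in place, the pigeonhole over $r+1$ pairwise-disjoint petals $Q_i$ cleanly isolates $v\in Y$, and the positive co-degree lower bound then pins $T$ to $Y$ exactly.
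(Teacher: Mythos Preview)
Your proof is correct and matches the paper's argument essentially line for line: apply the positive co-degree condition to the $(r-1)$-set $P\cup Z$, test the resulting $k$ extension hyperedges against the hyperedges of $\mathcal{S}_Y$, use the pairwise-disjoint petals of $\mathcal{S}_Y$ to force each extension vertex into $Y$, and finish by counting ($|T|\ge k=|Y|$ gives $T=Y$). The paper's write-up is terser but identical in substance.

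One remark on the ``delicate point'' you flag. The disjointness $P\cap Y=\emptyset$ is not actually an issue requiring a workaround. If you trace the induction in Claim~\ref{sunflower-pair}, the hyperedges of $\mathcal{S}_i$ are of the form $P\cup Z_i\cup\{y_k,\dots,y_{i+1}\}$ where $P$ is a petal of $\mathcal{S}_{i-1}$; hence the petals of each $\mathcal{S}_i$ are a subset of the petals of $\mathcal{S}_0=\mathcal{S}$, and those are disjoint from the core $Y$ of $\mathcal{S}$ by definition of a sunflower. So every petal of $\mathcal{S}_Z=\mathcal{S}_{k-1}$ is already disjoint from $Y$, and your sub-sunflower pruning step is unnecessary. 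This is worth noting because the later Claim~\ref{technical-claim} is phrased for \emph{every} petal of $\mathcal{S}_Z$, so silently shrinking $\mathcal{S}_Z$ would need to be tracked through the rest of the argument.
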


\begin{proof}
Observe that the $(r-1)$-set $P \cup Z$ is contained in the hyperedge $P \cup Z \cup \{y_k\}$, so by the positive co-degree condition $P \cup Z$ is contained
in $k$ hyperedges of $\mathcal{H}$. Moreover, each of these hyperedges must intersect every hyperedge in the sunflower $\mathcal{S}_Y$. As $\mathcal{S}_Y$ has at least $2$ petals, each of the $k$ hyperedges containing $P \cup Z$ must contain a distinct vertex of $Y$.
\end{proof}

We now continue with a technical claim that will imply the theorem.

\begin{claim}\label{technical-claim} 
For every $k$-set $T \subset Y\cup Z$ we have:
\begin{enumerate}
\item[(1)] $Q \cup T \in E(\mathcal{H})$ for every petal $Q$ of $\mathcal{S}_Y$,

\item[(2)] $((Y \cup Z)\setminus T) \cup \{s\} \cup P  \in E(\mathcal{H})$ for every $s \in T$ and petal $P$ of $\mathcal{S}_Z$.
\end{enumerate}
\end{claim}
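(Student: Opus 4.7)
My plan is to prove Claim~\ref{technical-claim} by a joint induction on parts (1) and (2). First observe that as $s$ ranges over $T$ and $T$ over $k$-subsets of $X := Y \cup Z$, the $k$-set $(X\setminus T)\cup\{s\}$ appearing in (2) ranges over every $k$-subset of $X$. Hence (1) and (2) together claim that $Q \cup T, P \cup T \in E(\mathcal{H})$ for every petal $Q$ of $\mathcal{S}_Y$, every petal $P$ of $\mathcal{S}_Z$, and every $k$-subset $T \subseteq X$. The base cases are (1) at $T = Y$ (from $\mathcal{S}_Y \subseteq \mathcal{H}$) and (2) at each $T = Z \cup \{y\}$, $y \in Y$ (from Claim~\ref{kernel-claim}).

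The main tool is the following principle. Suppose (1) has been established for some $k$-set $T^* \subseteq X$, so that $\{Q'' \cup T^* : Q'' \text{ petal of } \mathcal{S}_Y\}$ forms a sunflower with core $T^*$ and $r+1$ pairwise-disjoint petals in $\mathcal{H}$. Then for any $(r-1)$-set $\tilde S$ contained in some edge of $\mathcal{H}$ with $\tilde S \cap T^* = \emptyset$, every extension $\tilde S \cup \{v\} \in E(\mathcal{H})$ must intersect each hyperedge $Q'' \cup T^*$, and since the $Q''$ are pairwise disjoint this forces $v \in T^*$. Combined with $\delta_{r-1}^+(\mathcal{H}) \geq k = |T^*|$, the set of extensions is exactly $T^*$. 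A symmetric principle holds using the petals of $\mathcal{S}_Z$.

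To derive (1) for a target $k$-set $T$, I pick a suitable $t \in T$, set $\tilde S := Q \cup (T \setminus \{t\})$, and take $T^* := (X \setminus T) \cup \{t\}$, the unique $k$-subset of $X$ disjoint from $T \setminus \{t\}$. Provided $T^*$ has been established (for either (1) or (2)) and $\tilde S$ is contained in an already-established edge, the principle gives $Q \cup T \in E(\mathcal{H})$ as the extension at $v = t \in T^*$; a symmetric construction handles (2). The induction proceeds in steps $\ell = 1, 2, \ldots$, where step $\ell$ first establishes (1) for all $T$ with $|T \cap Z| = \ell$ (invoking (2) at $|T^* \cap Z| = k-\ell$, done at step $\ell-1$, by taking $t \in T \cap Z$), and then (2) for all $T'$ with $|T' \cap Z| = k-1-\ell$ (invoking the just-established (1), by taking $t' \in T' \cap Y$).

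The main obstacle is verifying simultaneously both conditions: (a) that $T^*$ has been established, and (b) that $\tilde S$ is contained in some already-established edge. For (a), the alternation between (1) and (2), coupled with the two independent sunflower families $\mathcal{S}_Y$ and $\mathcal{S}_Z$, supplies a witness at each step. For (b), the choice of $t$ must be made so that a replacement $T^{**} := (T \setminus \{t\}) \cup \{v^0\}$ with some $v^0 \in X \setminus T$ falls within an already-established stage, providing the seed edge $Q \cup T^{**}$ (or $P \cup T^{**}$) that contains $\tilde S$. The symmetry $T \leftrightarrow T^*$ under complementation in $\binom{X}{k}$, together with this alternation, ultimately propagates the two base cases to every $k$-subset of $X$ for both parts.
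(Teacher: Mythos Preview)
Your proposal is correct and follows essentially the same approach as the paper. The paper also inducts on $|T\cap Z|$: at each stage it uses the sunflower coming from part~(2) at the previous stage (with core $(X\setminus T')\cup\{y\}$, which is exactly your $T^*=(X\setminus T)\cup\{t\}$) to force the extensions needed for~(1), and then uses the freshly obtained sunflower from~(1) to force~(2); your reformulation of~(2) as ``$P\cup T'\in E(\mathcal{H})$ for every $k$-subset $T'\subseteq X$ and every petal $P$ of $\mathcal{S}_Z$'' is a clean equivalent that makes the symmetry with~(1) explicit, and your choices of $t,t',v^0$ and $T^{**}$ reproduce the paper's $z,y$ and $T'$.
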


\begin{proof}
We proceed by induction on $t=|T \cap Z|$. Note that $t \leq k-1$.
When $t=0$ we have that $T =Y$, then (1) is immediate as $Q \cup Y \in E(\mathcal{S}_Y) \subset \mathcal{H}$ and (2) follows from Claim~\ref{kernel-claim}. 

Let $t>0$ and suppose the statement of the claim holds for all smaller values of $t$. As $0<  t\leq k-1$, there exists a $z \in Z \cap T$ and a $y \in Y\setminus T$. Fix an arbitrary petal $Q$ of $\mathcal{S}_Y$. 
Put $T' =  T \cup \{y\} \setminus \{z\}$ and note that $|T' \cap Z| = t-1$. Therefore, by induction, we have $Q \cup T' \in E(\mathcal{H})$ and $((Y\cup Z) \setminus T') \cup \{s'\} \cup P \in E(\mathcal{H})$ for every $s' \in T'$ and petal $P$ of $\mathcal{S}_Z$.

By the positive co-degree condition, the $(r-1)$-set $Q \cup T' \setminus \{y\}$ is contained in at least $k$ hyperedges. Moreover, $Q \cup T' \setminus \{y\}$ is disjoint from the hyperedges of the form $((Y \cup Z) \setminus T') \cup \{y\} \cup P$ where $P$ is a petal of $\mathcal{S}_Z$.
As $\mathcal{S}_Z$ has $r+1$ petals and $\h$ is intersecting, this implies that
the $k$ hyperedges containing $Q \cup T' \setminus \{y\}$ each intersect the $k$-set $((Y \cup Z) \setminus T) \cup \{y\}$. In particular, $(Q \cup T' \setminus \{y\}) \cup \{z\} = Q \cup T$ is a hyperedge of $\mathcal{H}$. This proves (1). 

In order to prove (2), let us fix an arbitrary petal $P$ of $\mathcal{S}_Z$.
Observe that the $(r-1)$-set 
\[
((Y \cup Z) \setminus T) \cup P =((Y \cup Z) \setminus ( T' \cup \{z\} \setminus \{y\})) \cup P  = ((Y \cup Z) \setminus T') \setminus \{z\} \cup \{y\} \cup P
\]
is contained in 
the hyperedge $(Y \cup Z) \setminus T' \cup \{y\} \cup P \in E(\mathcal{H})$ whose existence is given by the inductive hypothesis on (2) with $y=s' \in T'$. Therefore, the positive co-degree condition guarantees that the $(r-1)$-set 
$((Y \cup Z) \setminus T) \cup P$ is contained in $k$ hyperedges. In order for these hyperedges to intersect the $r+1$ hyperedges $Q \cup T$ for each petal $Q$ of $\mathcal{S}_Y$, we have that each set of the form $((Y \cup Z) \setminus T) \cup \{s\} \cup P$ for $s \in T$ must be a hyperedge of $\mathcal{H}$.
\end{proof}

We are now ready to complete the proof of Theorem~\ref{general-k-thm}. Suppose that there is a hyperedge $h \in E(\mathcal{H})$ such that $|h \cap (Y \cup Z)| \leq k-1$. Then there exists a $k$-set $T \subset Y \cup Z$ such that $T$ is disjoint from $h$. Moreover, as $\mathcal{S}_Y$ has at least $r+1$ petals, there is a petal $Q$ in $\mathcal{S}_Y$ that is disjoint from $h$. By Claim~\ref{technical-claim} we have that $T \cup Q  \in E(\mathcal{H})$ which is disjoint from $h \in E(\mathcal{H})$. This violates the intersecting property of $\mathcal{H}$, a contradiction.

Therefore, every hyperedge $h\in E(\mathcal{H})$ intersects $Y \cup Z$ in at least $k$ vertices. This implies that $\mathcal{H}$ is a subhypergraph of a $k$-kernel system, i.e., as $\mathcal{H}$ is edge-maximal, it is exactly a $k$-kernel system.
\end{proof}

\begin{remark} \normalfont
Observe that the proof of Theorem~\ref{general-k-thm} gives a stability result. In particular, if $\mathcal{H}$ has enough edges to apply Lemma~\ref{k-core}, then we have that $\mathcal{H}$ is a subhypergraph of a $k$-kernel system.
\end{remark}

\section{Improved thresholds on $n$}\label{small-n-section}

We now show that in the case $k\leq 3$, Theorem~\ref{general-k-thm} holds for $n \geq c r^{k+2}$. In Theorem~\ref{general-k-thm} we need $n$ to be at least double exponential in $r$.
Recall that two hypergraphs $\mathcal{A}$ and $\mathcal{B}$ are {\it cross-intersecting} if for every pair of hyperedges $A \in E(\mathcal{A})$ and $B \in E(\mathcal{B})$ we have $A \cap B \neq \emptyset$.
Also, a {\it transversal} for a hypergraph $\mathcal{H}$ is a set of vertices $T$ such that $T \cap h \neq \emptyset$ for every hyperedge $h \in E(\h)$. The {\it transversal number} $\tau(\h)$ is the minimum $t$ such that there is a transversal $T$ of $\h$ of size $t$.

We begin with a simple bound on the size of an intersecting hypergraph $\mathcal{H}$ with transversal number $\tau(\mathcal{H})=t$. Stronger results for $\tau(\mathcal{H})=3$ and $\tau(\mathcal{H})=4$ are given by Frankl~\cite{Fr-tau} and Frankl, Ota and Tokushige~\cite{FOT}, but we include an argument for the sake of  completeness and as our argument holds for all $n$ and $t$.

\begin{lemma}\label{tau-bound}
    Fix $n \geq r \geq t$.
	Let $\h$ be an intersecting $r$-uniform $n$-vertex hypergraph with transversal number $\tau(\h) \geq t$. Then
	\[
	|E(\h)| \leq r^{t} \binom{n-t}{r-t}.	
	\]
\end{lemma}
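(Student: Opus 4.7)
My plan is to iteratively refine $\mathcal{H}$ by restricting to edges containing a well-chosen vertex, losing at most a factor of $r$ at each step, until the surviving edges all contain a common $t$-set. The hypothesis $\tau(\mathcal{H}) \geq t$ enters through the observation that for any set $S$ of fewer than $t$ vertices there exists an edge of $\mathcal{H}$ disjoint from $S$.

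Concretely, I would construct distinct vertices $v_1,\ldots,v_t$ together with a nested chain $\mathcal{H}=\mathcal{H}^{(0)}\supseteq \mathcal{H}^{(1)}\supseteq\cdots\supseteq \mathcal{H}^{(t)}$ so that every edge of $\mathcal{H}^{(i)}$ contains all of $v_1,\ldots,v_i$. At step $i$, since $|\{v_1,\ldots,v_{i-1}\}|=i-1<t$, the transversal hypothesis supplies an edge $h_i\in\mathcal{H}$ disjoint from $\{v_1,\ldots,v_{i-1}\}$. Because $\mathcal{H}$ is intersecting and $\mathcal{H}^{(i-1)}\subseteq\mathcal{H}$, every $h\in\mathcal{H}^{(i-1)}$ satisfies $h\cap h_i\neq\emptyset$, so we may define $\phi_i:\mathcal{H}^{(i-1)}\to h_i$ by choosing $\phi_i(h)\in h\cap h_i$, and let $v_i\in h_i$ be a vertex with the largest preimage. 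Setting $\mathcal{H}^{(i)}:=\phi_i^{-1}(v_i)$, pigeonhole on the $r$ vertices of $h_i$ yields $|\mathcal{H}^{(i)}|\geq |\mathcal{H}^{(i-1)}|/r$, and $v_i\in h_i$ together with $\{v_1,\ldots,v_{i-1}\}\cap h_i=\emptyset$ guarantees that $v_i$ is genuinely new. By induction on $i$, every edge of $\mathcal{H}^{(i)}$ inherits $v_1,\ldots,v_{i-1}$ from $\mathcal{H}^{(i-1)}$ and acquires $v_i$ from the preimage condition.

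After $t$ steps every edge in $\mathcal{H}^{(t)}$ contains the $t$-set $\{v_1,\ldots,v_t\}$, so $|\mathcal{H}^{(t)}|\leq \binom{n-t}{r-t}$. Telescoping the factor-$r$ losses gives
\[
|E(\mathcal{H})|=|\mathcal{H}^{(0)}|\leq r\,|\mathcal{H}^{(1)}|\leq \cdots \leq r^{t}\,|\mathcal{H}^{(t)}|\leq r^{t}\binom{n-t}{r-t},
\]
as desired. I do not anticipate a serious obstacle: the only things to verify are the existence of $h_i$ (immediate from $\tau(\mathcal{H})\geq t$) and that $v_i$ is distinct from $v_1,\ldots,v_{i-1}$ (immediate from $h_i$ avoiding those vertices), both of which are built into the construction.
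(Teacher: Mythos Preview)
Your proof is correct and follows essentially the same branching argument as the paper: at each stage one uses $\tau(\mathcal{H})\ge t$ to find an edge avoiding the vertices chosen so far, and then the intersecting property to split into at most $r$ classes. The only cosmetic difference is that the paper builds the entire tree of $t$-sets (a covering family $\mathcal{T}$ with $|\mathcal{T}|\le r^t$ such that every edge of $\mathcal{H}$ contains some member of $\mathcal{T}$), while you apply pigeonhole at each step to follow a single heavy branch; these are two standard, equivalent presentations of the same idea.
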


\begin{proof}
    Let us construct a $t$-uniform hypergraph $\mathcal{T}$ with $|E(\mathcal{T})| \leq r^t$ such that for every $h \in E(\mathcal{H})$ there exists a $h' \in E(\mathcal{T})$ with $h' \subset h$. The existence of $\mathcal{T}$ immediately implies the lemma as $|E(\h)| \leq |E(\mathcal{T})| \binom{n-t}{r-t}$.
    
    We proceed iteratively. First select an arbitrary hyperedge $h_1 \in E(\h)$. For each vertex $v_1 \in h_1$, the set $\{v_1\}$ is not a transversal of $\h$, so there is a hyperedge $h_2 \in E(\h)$ that is disjoint from $\{v_1\}$.
    For each vertex $v_2 \in h_2$, the set $\{v_1,v_2\}$ is not a transversal of $\h$, so there is a hyperedge $h_3 \in E(\h)$ that is disjoint from $\{v_1,v_2\}$. We continue this process to select a set of $t$ distinct vertices $v_1,v_2,\dots, v_t$. Let $\mathcal{T}$ be the collection of all $t$-sets constructed in this way. 
    Note that in each step there are at most $r$ choices for the vertex $v_i$, so $|E(\mathcal{T})| \leq r^t$.
    
    Now it remains to show that for every $h \in E(\mathcal{H})$ there exists an $h' \in E(\mathcal{T})$ with $h' \subset h$. Observe that at each step $i$, our hyperedge $h$ must intersect $h_i$, so there is a choice of vertex in $h_i \cap h$. Therefore, there is at least one $r$-set constructed that is contained in $h$.
\end{proof}

We first consider the case of minimum positive co-degree at least $2$.

\begin{prop}
Fix $r \geq 3$ and let $n \geq \frac{1}{3} r^4$.
	Let $\mathcal{H}$ be an intersecting $r$-uniform $n$-vertex hypergraph with minimum positive co-degree $\delta^+_{r-1}(\mathcal{H}) \geq 2$. If $\mathcal{H}$ has the maximum number of hyperedges, then $\mathcal{H}$ is a $2$-kernel system.  
\end{prop}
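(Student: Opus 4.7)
The plan is to determine the transversal number $\tau(\mathcal{H})$ and reduce to the case $\tau=2$, from which the $2$-kernel structure follows directly via an Erd\H os-Ko-Rado style argument. Throughout, note that the $2$-kernel system $\mathcal{K}$ on any distinguished $3$-set is intersecting with $\delta^+_{r-1}(\mathcal{K})=2$ and $|E(\mathcal{K})|=3\binom{n-3}{r-2}+\binom{n-3}{r-3}$, so by maximality $|E(\mathcal{H})|\geq |E(\mathcal{K})|$.

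First I would rule out $\tau(\mathcal{H})=1$: if some vertex $v$ lies in every edge, then for any $h\in E(\mathcal{H})$ the $(r-1)$-set $h\setminus\{v\}$ would have to be contained in a second edge $(h\setminus\{v\})\cup\{x\}$, which must also contain $v$, forcing $x=v$ -- a contradiction. Next I would rule out $\tau(\mathcal{H})\geq 3$ using Lemma~\ref{tau-bound} with $t=3$: this yields $|E(\mathcal{H})|\leq r^3\binom{n-3}{r-3}$, and since $\binom{n-3}{r-3}=\frac{r-2}{n-r}\binom{n-3}{r-2}$, the hypothesis $n\geq r^4/3$ translates (via the elementary check $r^2\geq 3/2$) into $r^3(r-2)/(n-r)<3$, so $|E(\mathcal{H})|<3\binom{n-3}{r-2}\leq |E(\mathcal{K})|$, contradicting maximality. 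Hence $\tau(\mathcal{H})=2$.

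Now fix a minimum transversal $\{u,v\}$ and partition $E(\mathcal{H})=\mathcal{H}_u\sqcup\mathcal{H}_v\sqcup\mathcal{H}_{uv}$ according to $h\cap\{u,v\}$. Let $\mathcal{A}_u=\{h\setminus\{u\}:h\in\mathcal{H}_u\}$ and define $\mathcal{A}_v$ symmetrically, both as $(r-1)$-uniform families on $V\setminus\{u,v\}$. For each $h\in\mathcal{H}_u$, the positive co-degree condition on $h\setminus\{u\}$ produces a second edge $(h\setminus\{u\})\cup\{x\}$, which must meet $\{u,v\}$; the only option is $x=v$, so $\mathcal{A}_u\subseteq\mathcal{A}_v$, and by symmetry $\mathcal{A}_u=\mathcal{A}_v=:\mathcal{A}$ (the case $\mathcal{A}=\emptyset$ is impossible, as then $\{u\}$ alone would be a transversal). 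Since edges of $\mathcal{H}_u$ and $\mathcal{H}_v$ share no vertex in $\{u,v\}$, their required intersection lies entirely inside $V\setminus\{u,v\}$, so $\mathcal{A}$ is an intersecting $(r-1)$-uniform family on $n-2$ vertices. Because $n-2>2(r-1)$, Erd\H os-Ko-Rado gives $|\mathcal{A}|\leq\binom{n-3}{r-2}$, with the unique maximum being a star centered at some $w$. Combined with the trivial $|\mathcal{H}_{uv}|\leq\binom{n-2}{r-2}$ and Pascal's identity, $|E(\mathcal{H})|\leq 2\binom{n-3}{r-2}+\binom{n-2}{r-2}=3\binom{n-3}{r-2}+\binom{n-3}{r-3}=|E(\mathcal{K})|$. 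Maximality forces equality throughout: $\mathcal{A}$ must be a star on some $w$ and $\mathcal{H}_{uv}$ must be the full star on $\{u,v\}$, whence $E(\mathcal{H})=\{h\in\binom{V}{r}:|h\cap\{u,v,w\}|\geq 2\}$, the $2$-kernel system with kernel $\{u,v,w\}$.

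The main obstacle is the $\tau=3$ case -- the naive partition into the seven families $\mathcal{H}_S$ (indexed by $S=h\cap T$) combined with cross-intersection and EKR leaves a slack of order $\binom{n-4}{r-2}$ beyond the $2$-kernel bound, and the polynomial threshold $n\geq r^4/3$ is obtained precisely because Lemma~\ref{tau-bound} delivers the much sharper bound $|E(\mathcal{H})|\leq r^3\binom{n-3}{r-3}$.
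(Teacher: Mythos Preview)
Your proof is correct and follows essentially the same approach as the paper's own argument: both split into cases on $\tau(\mathcal{H})$, use Lemma~\ref{tau-bound} with $t=3$ to dispose of $\tau\geq 3$, and in the $\tau=2$ case show that the two link families $\mathcal{A}_u=\mathcal{A}_v$ via the positive co-degree condition and then invoke Erd\H os--Ko--Rado to force a common star center $w$. Your presentation is slightly more explicit in verifying the numerical inequality and in spelling out the equality case (that $\mathcal{H}_{uv}$ must also be full), but the ideas are identical.
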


\begin{proof}
	We distinguish three cases based on the minimum transversal size $\tau(\mathcal{H})$ of $\mathcal{H}$.
	
	\smallskip
	
	\noindent {\bf Case 1:} $\tau(\mathcal{H}) = 1$. 
	
	\smallskip
	Then there is a vertex $x$ in each hyperedge of $\mathcal{H}$. Fix a hyperedge $h \in E(\mathcal{H})$ and observe that the $(r-1)$-set $h \setminus \{x\}$ is contained in exactly one hyperedge which violates the positive co-degree condition.
	
	\smallskip
	
	\noindent {\bf Case 2:} $\tau(\mathcal{H}) \geq 3$. 
	
	\smallskip
	
	Then Lemma~\ref{tau-bound} gives
	\[
	|E(\mathcal{H})| \leq r^3 \binom{n-3}{r-3}
	\]
	which for $n \geq \frac{1}{3} r^4$ is smaller than $3 \binom{n-3}{r-2}$, a contradiction.
	\smallskip
	
	\noindent {\bf Case 3:} $\tau(\mathcal{H}) = 2$. 
	
	\smallskip
	
	Let $\{x,y\}$ be a minimum transversal of $\mathcal{H}$. Consider the $(r-1)$-uniform hypergraphs $\mathcal{H}_x = \{h \setminus \{x\} : h \in E(\h) \text{ and }  h \cap \{x,y\} = \{x\}\}$ and $\mathcal{H}_y = \{h \setminus \{y\} : h \in E(\h) \text{ and } h \cap \{x,y\} = \{y\}\}$. First observe that this pair of hypergraphs is cross-intersecting as $\mathcal{H}$ is intersecting. Now observe that any hyperedge $h \in E(\mathcal{H}_x)$ is a set of size $r-1$ that is contained in a hyperedge of $\mathcal{H}$. Thus, $h$ has co-degree at least $2$, therefore must be a member of $\mathcal{H}_y$. This implies that $\mathcal{H}_x = \mathcal{H}_y$, therefore $\mathcal{H}_x$ is intersecting. 
	
	Now if $\h_x = \h_y$ is not a maximal star, then by the Erd\H os-Ko-Rado theorem we have
	\[
	|E(\h)| < 2 \binom{n-3}{r-2} + \binom{n-2}{r-2} = 3 \binom{n-3}{r-2} + \binom{n-3}{r-3},
	\]
	i.e., $\h$ has fewer hyperedges than a $2$-kernel system, a contradiction.
	 Therefore, every hyperedge of $\mathcal{H}_x$ contains a fixed vertex $z$. This implies that every hyperedge of $\mathcal{H}$ contains at least two of $\{x,y,z\}$, i.e., maximality implies that $\mathcal{H}$ is a $2$-kernel system.
\end{proof}

We now turn to the case when $k = 3$. 
We will need two lemmas. The first is due to Frankl (Proposition~1.4 in~\cite{Fr2}).

\begin{lemma}[Frankl,~\cite{Fr2}]\label{cross}
	Let $\A$ and $\B$ be cross-intersecting hypergraphs on vertex set $[N]$ such that $\A$ is $a$-uniform and $\B$ is $(a+1)$-uniform and intersecting. If $N > 2a+1$, then
	\[
	|\A|+|\B| \leq \binom{N}{a},
	\]
	with equality if and only if either $\B$ is empty and $\A$ has size $\binom{N}{a}$ or 
	both $\A$ and $\B$ are maximal stars containing the same a fixed vertex $q$.
\end{lemma}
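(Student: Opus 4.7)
The plan is to apply the shifting (compression) technique. The $(i,j)$-shift, applied to both $\A$ and $\B$, preserves the uniformities, the sizes, the cross-intersecting property, and the intersecting property of $\B$; these are standard verifications. Hence we may assume that $\A$ and $\B$ are both left-compressed throughout.

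With $\A, \B$ shifted, we split based on whether $\B$ has a common vertex, which under shifting is forced to be the vertex $1$. In the \emph{star case} $\B$ is contained in the full star at $1$, and we decompose $\A = \A_1 \sqcup \A_{\bar 1}$ according to containment of $1$. The cross-intersecting condition implies that $\A_{\bar 1}$ and $\B^{*} := \{B \setminus \{1\} : B \in \B\}$ are cross-intersecting $a$-uniform families on $[N] \setminus \{1\}$. Since $\A_{\bar 1}$ must avoid the lower $a$-shadow of the complementary $(N-1-a)$-uniform family $\{([N]\setminus\{1\}) \setminus B^{*} : B^{*} \in \B^{*}\}$, the Kruskal--Katona theorem (combined with the inequality $\binom{x}{N-1-a} \leq \binom{x}{a}$ for $x \leq N-1$, which holds since $N \geq 2a+2$) yields $|\A_{\bar 1}| + |\B^{*}| \leq \binom{N-1}{a}$. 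Together with the trivial $|\A_1| \leq \binom{N-1}{a-1}$ this gives $|\A| + |\B| \leq \binom{N}{a}$. In the \emph{non-star case}, Hilton--Milner provides the bound $|\B| \leq \binom{N-1}{a} - \binom{N-a-2}{a} + 1$, a strict drop from the star bound; a careful analysis of $\A$ exploiting the shifted non-star structure (in particular, the presence in $\B$ of two members with singleton intersection) gives a matching restriction on $|\A|$, keeping $|\A| + |\B| < \binom{N}{a}$ strictly.

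The main obstacle is establishing the cross-intersecting $a$-uniform bound via Kruskal--Katona in the star case, and the fine structural analysis in the non-star case. For equality, tracing back: the Kruskal--Katona step is tight only when one of $\A_{\bar 1}$ or $\B^{*}$ is empty; combined with $\B$ being a star, this yields exactly two extremal scenarios, namely $\B = \emptyset$ with $\A = \binom{[N]}{a}$, or $\A$ and $\B$ both being the full star at a common vertex $q$ (recovered as $q = 1$ in the shifted case).
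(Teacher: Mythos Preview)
The paper does not prove this lemma at all: it is quoted as Proposition~1.4 from Frankl~\cite{Fr2} and used as a black box in the proof of Theorem~\ref{k=3}. So there is no ``paper's own proof'' to compare your attempt against.

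Evaluating your outline on its own merits: the star case is essentially correct. Once $\B$ is shifted and every $B$ contains $1$, the decomposition $\A=\A_1\sqcup\A_{\bar 1}$ together with the Hilton/Kruskal--Katona bound $|\A_{\bar 1}|+|\B^{*}|\le\binom{N-1}{a}$ (valid since $N-1>2a$) and $|\A_1|\le\binom{N-1}{a-1}$ gives the desired $\binom{N}{a}$ by Pascal's identity, and the equality analysis you sketch is right in this regime.

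The non-star case, however, is a genuine gap rather than a routine verification. Your parenthetical claim that a shifted, intersecting, non-star $(a+1)$-uniform family $\B$ must contain ``two members with singleton intersection'' is false: take $\B=\binom{[a+2]}{a+1}$, which is shifted, intersecting, and not a star, yet any two of its members meet in exactly $a$ elements. What shifting does give you is that every $(a+1)$-subset of $[a+2]$ lies in $\B$; this forces $|A\cap[a+2]|\ge 2$ for every $A\in\A$, but combining the resulting bound on $|\A|$ with the Hilton--Milner bound on $|\B|$ does \emph{not} immediately yield $|\A|+|\B|<\binom{N}{a}$ (already for $N=2a+2$ the sum of the two bounds exceeds $\binom{N}{a}$). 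A finer argument is needed here --- for instance, exploiting cross-intersection with $\B$ more fully than just via two fixed members --- and you have not supplied one. You should either carry out this step explicitly or, as the paper does, cite Frankl for the full statement.

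One further point: for the equality characterization you also need to argue that equality for the shifted families forces equality (with the stated structure) for the original families. This is standard but not automatic, and your write-up does not address how the star structure survives ``un-shifting''.
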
	


The next lemma gives the size of a minimum transversal for a hypergraph with minimum co-degree at least $k$.


\begin{lemma}\label{tau-is-k}
Fix  $r \geq 3$, $k \geq 2$ and let $n \geq 2{\binom{2k-1}{k}}^{-1}(r-k)r^{k+1}$.
	Let $\mathcal{H}$ be an intersecting $r$-uniform $n$-vertex hypergraph with minimum positive co-degree $\delta_{r-1}^+(\mathcal{H}) \geq k$. If $\mathcal{H}$ has the maximum number of hyperedges, then $\mathcal{H}$ has  transversal number $\tau(\mathcal{H}) = k$.
\end{lemma}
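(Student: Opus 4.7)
The plan is to show the matching bounds $\tau(\mathcal{H})\ge k$ and $\tau(\mathcal{H})\le k$. The first bound will follow from the positive co-degree hypothesis alone (by an argument essentially identical to the one used inside the proof of Lemma~\ref{sunflower-transversal}), while the second bound will come from combining edge-maximality with Lemma~\ref{tau-bound}.

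For $\tau(\mathcal{H})\ge k$, I would assume for contradiction the existence of a transversal of size at most $k-1$ and let $T$ be a minimum such transversal. The hypergraph $\mathcal{H}$ is non-empty, so $|T|\ge 1$; fix any $v\in T$. By minimality of $T$, the set $T\setminus\{v\}$ is not a transversal, so some hyperedge $h\in E(\mathcal{H})$ avoids $T\setminus\{v\}$, and since $T$ itself is a transversal we must have $h\cap T=\{v\}$. The $(r-1)$-set $h\setminus\{v\}$ lies in $h$, so by $\delta_{r-1}^+(\mathcal{H})\ge k$ it is contained in at least $k$ hyperedges of $\mathcal{H}$. Each such hyperedge has the form $(h\setminus\{v\})\cup\{u\}$; since every hyperedge must meet the transversal $T$ and $(h\setminus\{v\})\cap T=\emptyset$, we need $u\in T$. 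This produces at most $|T|\le k-1$ hyperedges through $h\setminus\{v\}$, the desired contradiction.

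For $\tau(\mathcal{H})\le k$, note that a $k$-kernel system is an intersecting $r$-uniform hypergraph with $\delta_{r-1}^+\ge k$, so edge-maximality forces
\[
|E(\mathcal{H})|\ge \binom{2k-1}{k}\binom{n-2k+1}{r-k}.
\]
Suppose for contradiction that $\tau(\mathcal{H})\ge k+1$. Lemma~\ref{tau-bound} applied with $t=k+1$ then gives $|E(\mathcal{H})|\le r^{k+1}\binom{n-k-1}{r-k-1}$, and it suffices to verify that these two bounds are incompatible under the hypothesis $n\ge 2(r-k)r^{k+1}/\binom{2k-1}{k}$. Writing the ratio of binomials as
\[
\frac{\binom{n-2k+1}{r-k}}{\binom{n-k-1}{r-k-1}} = \frac{n-r-k+2}{r-k}\prod_{j=0}^{r-k-2}\frac{n-2k+1-j}{n-k-1-j},
\]
the product factor exceeds $1/2$ once $n$ is larger than a small polynomial in $r$ and $k$, so the ratio is at least $(n-r-k+2)/(2(r-k))$. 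Inserting this estimate together with the hypothesized lower bound on $n$ yields the required strict inequality.

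The proof is mostly structural, with the only slightly delicate point being the binomial comparison in the final step: one must verify that the product $\prod_{j}(n-2k+1-j)/(n-k-1-j)$ is bounded below by a positive constant already at the stated threshold on $n$. This is routine arithmetic and not a substantive obstacle.
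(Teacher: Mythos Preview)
Your proposal is correct and follows essentially the same two-part argument as the paper: the lower bound $\tau(\mathcal{H})\ge k$ via a minimal transversal and the positive co-degree condition, and the upper bound $\tau(\mathcal{H})\le k$ by comparing the $k$-kernel lower bound with Lemma~\ref{tau-bound}. The only difference is cosmetic---you write out the binomial ratio explicitly, whereas the paper simply asserts the inequality at the stated threshold.
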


\begin{proof}
	First suppose that $\tau(\h) < k$. As in the proof of Lemma~\ref{sunflower-transversal}, let $X$ be a minimal transversal for $\h$ and consider a hyperedge $h$ that intersects $X$ in exactly one element. Such a hyperedge exists as otherwise $X$ is not minimal. The $(r-1)$-set $h \setminus X$ is contained in at most $k-1$ hyperedges of $\h$; one for each element of $X$. This contradicts the co-degree condition on $\h$.
	
	Now suppose that  $\tau(\h) > k$.
	Lemma~\ref{tau-bound} gives
	$|E(\h)| \leq  r^{k+1}\binom{n-k-1}{r-k-1}$. On the other hand, our construction has at least $\binom{2k-1}{k} \binom{n-2k+1}{r-k}$ hyperedges. Therefore, for $n \geq 2{\binom{2k-1}{k}}^{-1}(r-k)r^{k+1}$ we have a contradiction, thus, $\tau(\h)=k$.
\end{proof}

Finally, we need a technical definition to construct auxiliary hypergraphs from $\mathcal{H}$.

\begin{defn}
	Let $\h$ be an $r$-uniform hypergraph and let $T$ be a fixed set of vertices in $\h$.
	For a subset $S \subset T$ define
	\[
	\h_S^T = \{h \setminus S : h \in E(\h) \text{ and }  h \cap T = S\},
	\]
	i.e., $\h^T_S$ is the $(r-|S|)$-uniform hypergraph constructed by removing $S$ from each hyperedge of $\h$ that intersects $T$ in exactly $S$.	
\end{defn}

For ease of notation we will often denote $\h_S^T$ by $\h_{x_1x_2\dots x_s}^T$ when $S = \{x_1,x_2,\dots, x_s\}$.

\begin{thm}\label{k=3}
Fix $r \geq 3$ and let $n \geq 2 r^5$.
	Let $\mathcal{H}$ be an intersecting $r$-uniform $n$-vertex hypergraph with minimum positive co-degree $\delta^+_{r-1}(\mathcal{H}) \geq 3$. 
	If $\mathcal{H}$ has the maximum number of hyperedges, then $\mathcal{H}$ is a $3$-kernel system.
\end{thm}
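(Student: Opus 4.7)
\medskip\noindent\textbf{Proof plan.}
The strategy is to mimic the $k=2$ argument one level up. Since $n\ge 2r^5$, Lemma~\ref{tau-is-k} applied with $k=3$ yields $\tau(\mathcal{H})=3$; fix a minimum transversal $T=\{x_1,x_2,x_3\}$ and decompose $E(\mathcal{H})$ according to $|h\cap T|$ using the $\mathcal{H}^T_S$ notation: set $\mathcal{A}_i:=\mathcal{H}^T_{x_i}$ for $i=1,2,3$ (an $(r-1)$-uniform family on $W:=V\setminus T$), $\mathcal{B}_{ij}:=\mathcal{H}^T_{x_ix_j}$ (each $(r-2)$-uniform on $W$), and $\mathcal{C}:=\mathcal{H}^T_{x_1x_2x_3}$, so that $|E(\mathcal{H})|=\sum_i|\mathcal{A}_i|+\sum_{i<j}|\mathcal{B}_{ij}|+|\mathcal{C}|$. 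The first key observation is that $\mathcal{A}_1=\mathcal{A}_2=\mathcal{A}_3=:\mathcal{A}$ and that $\mathcal{A}$ is intersecting on $W$: for $h$ with $h\cap T=\{x_i\}$, the $(r-1)$-set $h\setminus\{x_i\}$ is disjoint from $T$ and has co-degree at least $3$, so every extension must add a vertex of $T$; since there are only three choices, all three extensions $h\setminus\{x_i\}\cup\{x_j\}$ belong to $\mathcal{H}$, and the intersecting property of $\mathcal{A}$ follows from comparing $A\cup\{x_1\}$ and $A'\cup\{x_2\}$ in $\mathcal{H}$.

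Next I would extract two complementary bounds on the $\mathcal{B}_{ij}$. The pair $(\mathcal{B}_{jk},\mathcal{A})$ is cross-intersecting on $W$ of size $n-3>2(r-2)+1$ with $\mathcal{A}$ intersecting, so Lemma~\ref{cross} gives $|\mathcal{A}|+|\mathcal{B}_{jk}|\le\binom{n-3}{r-2}$ for each pair $\{j,k\}$. Separately, applying the positive co-degree condition to $h\setminus\{x_i\}$ for $h\in\mathcal{B}_{ij}$ produces three extensions in $\mathcal{H}$; one is $h$ itself, at most one adds the remaining $T$-vertex $x_k$ (yielding an edge of $\mathcal{B}_{jk}$), so at least one more adds some $v\notin T$, forcing $B\cup\{v\}\in\mathcal{A}$ with $B=h\setminus\{x_i,x_j\}$. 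Hence $\mathcal{B}_{ij}\subseteq\partial\mathcal{A}$ and $|\mathcal{B}_{ij}|\le|\partial\mathcal{A}|\le(r-1)|\mathcal{A}|$. The trivial $|\mathcal{C}|\le\binom{n-3}{r-3}$ is tight for the $3$-kernel system.

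I would then conclude with a case analysis on the transversal number $\tau(\mathcal{A})$ of $\mathcal{A}$ as a hypergraph on $W$. If $\mathcal{A}=\emptyset$ then every edge of $\mathcal{H}$ meets $T$ in at least two vertices, so $\{x_1,x_2\}$ is a transversal of $\mathcal{H}$, contradicting $\tau(\mathcal{H})=3$. If $\tau(\mathcal{A})\ge 3$, then Lemma~\ref{tau-bound} applied to $\mathcal{A}$ gives $|\mathcal{A}|\le(r-1)^3\binom{n-6}{r-4}$; combining this with the shadow bound $|\mathcal{B}_{ij}|\le(r-1)|\mathcal{A}|$ and the trivial bound on $\mathcal{C}$ forces $|\mathcal{H}|$ strictly below $\binom{5}{3}\binom{n-5}{r-3}$, the leading term in the size of the $3$-kernel system, once $n\ge 2r^5$, contradicting the assumed maximality of $\mathcal{H}$.

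The main obstacle, and where the bulk of the work lies, is the remaining case $\tau(\mathcal{A})\in\{1,2\}$, in which one must show that $\mathcal{A}$ is in fact a $2$-star at some fixed pair $\{p,q\}\subset W$ (every $A\in\mathcal{A}$ contains both $p$ and $q$). The anticipated route is iterative: whenever some vertex $q$ lies in every edge of $\mathcal{A}$, the set $\{q,x_1,x_2\}$ is another minimum transversal of $\mathcal{H}$, so re-running the decomposition from this alternative transversal yields a corresponding family which must itself have a common vertex; pairing these common vertices and combining the resulting constraints with the near-equality case of Lemma~\ref{cross} and the shadow bound $|\mathcal{B}_{ij}|\le|\partial\mathcal{A}|$ rules out every configuration in which $\mathcal{A}$ is not a $2$-star. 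Once $\mathcal{A}$ is identified as the $2$-star at $\{p,q\}$, edge-maximality together with the positive co-degree condition forces each $\mathcal{B}_{ij}$ to be the full $1$-star at $\{p,q\}$ and $\mathcal{C}$ to be complete, so $\mathcal{H}$ is exactly the $3$-kernel system with kernel $T\cup\{p,q\}$.
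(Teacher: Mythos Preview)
Your setup is correct and matches the paper: Lemma~\ref{tau-is-k} gives $\tau(\mathcal H)=3$, the three $(r-1)$-uniform links $\mathcal A_i$ coincide and are intersecting, and the shadow containment $\mathcal B_{ij}\subseteq\partial\mathcal A$ is exactly the paper's Claim~\ref{shadow}. Your treatment of $\tau(\mathcal A)\ge 3$ is also the paper's Case~3.

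The gap is in the heart of the argument, the case $\tau(\mathcal A)\in\{1,2\}$, where what you wrote is an outline rather than a proof, and the specific tools you name do not close it. Two concrete problems. First, your application of Lemma~\ref{cross} to the pair $(\mathcal B_{jk},\mathcal A)$ yields $|\mathcal A|+|\mathcal B_{jk}|\le\binom{n-3}{r-2}$, which is of order $n^{r-2}$, a full factor of $n$ larger than the size of a $3$-kernel system; you can therefore never force ``near-equality'' in this inequality from the maximality of $\mathcal H$, so the equality case of Lemma~\ref{cross} gives you nothing here. Second, your iterative transversal-switching collapses: if $u$ is common to all of $\mathcal A$ and you pass to $T'=\{u,x_1,x_2\}$, then every edge meeting $T'$ in exactly one vertex must still meet $X=\{x_1,x_2,x_3\}$, hence contains $x_3$; so the new family $\mathcal A'$ automatically has $x_3$ as a common vertex and you have learned nothing new. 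In particular you have no mechanism to rule out $\tau(\mathcal A)=2$ (in which case $\mathcal A$ is certainly not a $2$-star), nor to produce the second common vertex when $\tau(\mathcal A)=1$.

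What the paper does instead is to \emph{enlarge} the base set and drop one level of uniformity. If $\tau(\mathcal A)=1$ with common vertex $u$, set $T=\{x,y,z,u\}$; the co-degree condition then forces all six $(r-2)$-uniform links $\mathcal H^T_{ab}$ ($\{a,b\}\subset T$) to coincide and be intersecting, so Erd\H os--Ko--Rado plus an exact edge count shows this common link is a maximal star at some $v$, and $\{x,y,z,u,v\}$ is the kernel. If $\tau(\mathcal A)=2$ with minimal transversal $\{u,v\}$, set $T=\{x,y,z,u,v\}$; now one applies Lemma~\ref{cross} to the ten complementary pairs $\bigl(\mathcal H^T_{T\setminus\{a,b\}},\,\mathcal H^T_{ab}\bigr)$, which are $(r-3)$- and $(r-2)$-uniform. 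Summing gives exactly the $3$-kernel count $10\binom{n-5}{r-3}+5\binom{n-5}{r-4}+\binom{n-5}{r-5}$, so maximality forces equality, and the equality description in Lemma~\ref{cross} then shows the nonempty links $\mathcal H^T_{ab}$ are maximal stars sharing a single vertex $q$, contradicting $\tau(\mathcal A)=2$. The point you are missing is that Lemma~\ref{cross} must be used one uniformity level lower, against a $5$-element base set, for the bound to be tight.
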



\begin{proof}
	By Lemma~\ref{tau-is-k} we may assume the minimum transversal size of $\h$ is $\tau(\h)=3$.
	Let $X=\{x,y,z\}$ be a minimum transversal of $\h$. 

	 Consider the three $(r-1)$-uniform hypergraphs $\h^X_x$, $\h^X_y$ and $\h^X_z$. First observe that any pair of these hypergraphs is cross-intersecting as $\h$ is intersecting. Now observe that any hyperedge $h \in E(\h^X_x)$ is a set of size $r-1$ that is contained in a hyperedge of $\h$, therefore $h$ has co-degree at least $3$. This implies that $h$ is also a member of $\h^X_y$ and $\h^X_z$. Thus, all three hypergraphs $\h^X_x, \h^X_y, \h^X_z$ are the same. Moreover, this implies that $\h^X_x$ is intersecting.
	
	We distinguish three cases based on $\tau(\h^X_x)$.
	
	\smallskip
	
	\noindent {\bf Case 1:} $\tau(\h^X_x) =1$.
		
	\smallskip
	
	Let $u$ be a transversal of $\h^X_x$. Every hyperedge of $\h^X_x,\h^X_y,\h^X_z$ contains $u$, therefore, every hyperedge of $\h$ contains at least two vertices from $\{x,y,z,u\}$. 
	Put $T = X \cup \{u\} = \{x,y,z,u\}$.
	 
	 \begin{claim}
	 	The six hypergraphs $\h^T_{ab}$ for $a,b \in T =  \{x,y,z,u\}$ are equal.
	 \end{claim}
	
	\begin{proof}
		It is enough to show that $E(\h^T_{ab}) \subseteq E(\h^T_{ac})$ for any three vertices $a,b,c \in T$.
		Let $h \in E(\h^T_{ab})$ and consider the $(r-1)$-set $h \cup \{a\}$. By the co-degree condition on $\h$ we have that $h \cup\{a\}$ is contained in at least three hyperedges. Each of these hyperedges includes at least two vertices from $\{x,y,z,u\}$, so $h \cup \{a\}$ is contained in the hyperedge $h \cup \{a,c\}$, i.e., 
		$h \in E(\h^T_{ac})$.
	\end{proof}
	
	Observe that $\h^T_{xy}$ and $\h^T_{zu}$ are cross-intersecting, which implies that $\h^T_{xy}$ is intersecting. 
	Now if $\h^T_{xy}$ is not a maximal star, then by the Erd\H os-Ko-Rado theorem we have
	\[
	|E(\h)| < 6 \binom{n-5}{r-3} + 4 \binom{n-4}{r-3} + \binom{n-4}{r-4} = 10 \binom{n-5}{r-3} + 5 \binom{n-5}{r-4} + \binom{n-5}{r-5},
	\]
	i.e., $\h$ has fewer hyperedges than a $3$-kernel system, a contradiction.
	 Therefore, every hyperedge of $\mathcal{H}_{xy}$ contains a fixed vertex $v$. 
	 As the six hypergraphs $\h^T_{ab}$ for $a,b \in T =  \{x,y,z,u\}$ are equal, we have that every hyperedge of $\mathcal{H}$ contains at least three of $\{x,y,z,u,v\}$, i.e., maximality implies that $\mathcal{H}$ is a $3$-kernel system.
	
	\smallskip
	
	\noindent {\bf Case 2:} $\tau(\h^X_x) = 2$.
	
	\smallskip
	
	Let $u,v$ be a minimal transversal of $\h^X_x$, i.e., every hyperedge of $\h^X_x$ contains at least one of $u,v$. As $\h^X_x=\h^X_y=\h^X_z$, we have that every hyperedge of $\h$ contains at least two vertices from $T = \{x,y,z,u,v\}$.
	Moreover, $\h^T_{xu} = \h^T_{yu} = \h^T_{zu}$ and $\h^T_{xv} = \h^T_{yv} = \h^T_{zv}$ and each of these $(r-2)$-uniform hypergraphs is non-empty (as otherwise $u,v$ would not be a minimal transversal).
	Note that there is no hyperedge that intersects $T$ in exactly $u$ and $v$, so $\h^T_{uv}$ is empty. For simplicity, we consider the empty hypergraph as intersecting.
	
	\begin{claim}
		The hypergraph $\h^T_{ab}$ is intersecting for every $a,b \in T = \{x,y,z,u,v\}$.
	\end{claim}
	
	\begin{proof}
		Suppose not. Then there are hyperedges $A,B \in E(\h^T_{ab})$ such that $A \cap B = \emptyset$.
		By the co-degree condition, the $(r-1)$-set $A \cup \{a\}$ is contained in at least three hyperedges of $\h$. Since each hyperedge of $\h$ contains at least two elements from $T$, there is a hyperedge $A \cup \{a, c\}$ where $c \in T\setminus \{a,b\}$. Similarly, the $(r-1)$-set $B \cup \{b\}$ is contained in some hyperedge $B \cup \{b , d\}$  where $d \in T \setminus \{a,b,c\}$.
		However, the hyperedges $A \cup \{a,  c\}$ and $B \cup \{b, d\}$ are disjoint which violates the intersecting property of $\h$.
	\end{proof}
	
		Now for any $a,b \in T$ we have  $\h^T_{T\setminus\{a,b\}}$ and $\h^T_{ab}$ are cross-intersecting, $\h^T_{T\setminus\{a,b\}}$ is $(r-3)$-uniform and $\h^T_{ab}$ is $(r-2)$-uniform and intersecting. Therefore, as $n-5 > 2(r-3)+1$, 
		we may apply Lemma~\ref{cross} to get
	\[
	|E(\h^T_{ab})| + |E(\h^T_{T\setminus\{a,b\}})| \leq \binom{n-5}{r-3}.
	\]
Thus
\[
|E(\h)| = \sum_{S\subseteq T} |E(\h^T_S)|  \leq 10 \binom{n-5}{r-3} + 5 \binom{n-5}{r-4} + \binom{n-5}{r-5}.
\]

As $\h$ has the maximum number of hyperedges, we must have equality above.
Therefore, we must have that for every $a,b \in T$, the hypergraphs $\h^T_{T\setminus\{a,b\}}$ and $\h^T_{ab}$ have the form of one of the two extremal constructions in Lemma~\ref{cross}. In particular, $\h^T_{ab}$ is either empty or  a maximal star. As $\h^T_{xu} = \h^T_{yu} = \h^T_{zu}$ and $\h^T_{xv} = \h^T_{yv} = \h^T_{zv}$ are non-empty, each is a maximal star. The hypergraphs $\h^T_{xu}$ and $\h^T_{yv}$ are cross-intersecting which implies that all six of these these maximal stars share the same fixed vertex $q$. Therefore, we can replace minimal transversal $u,v$ of $\h_x^X$ with $q$, a contradiction.


	
	\smallskip

	\noindent {\bf Case 3:} $\tau(\h^X_x) \geq 3$.
	
	\smallskip
	
	Then Lemma~\ref{tau-bound} gives
	\[
	|E(\h^X_x)| \leq (r-1)^3 \binom{(n-1)-3}{(r-1)-3} \leq r^3\binom{n-4}{r-4}.
	\]
	The remaining hyperedges of $\h$ are counted by $\h^X_{xyz}$ and $\h^X_{ab}$ for $a,b \in \{x,y,z\}$. We need a simple claim. Recall that the {\it shadow} of an $r$-uniform hypergraph $\mathcal{G}$ is the collection of all $(r-1)$-sets contained in a hyperedge of $\mathcal{G}$. We denote the shadow of $\mathcal{G}$ by $\Delta(\mathcal{G})$.
	
	\begin{claim}\label{shadow}
		For each hyperedge $h \in E(\h^X_{yz})$ there is some hyperedge $g \in E(\h^X_x)$ that contains $h$. Thus,
		\[
		|E(\h^X_{yz})| \leq |\Delta(\h^X_x)|.
		\]
	\end{claim}

	\begin{proof}
	    Let $h$ be an arbitrary hyperedge of $\h^X_{yz}$.
		Consider the $(r-1)$-set $A = h \cup \{y\}$. The set $A$ has co-degree at least $3$, so it is contained in three hyperedges of $\h$; one such hyperedge is $A \cup \{z\}$, another could be $A \cup \{x\}$, so there exists at least one hyperedge of the form $A \cup \{w\}$ where $w \not \in \{x,y,z\}$. However, $A \cap \{x,y,z\} = \{y\}$, so $(A \cup \{w\}) \setminus \{y\} \in E(\h^X_y) = E(\h^X_x)$.
	\end{proof}
	
	By Claim~\ref{shadow} we have
	\[
	|E(\h^X_{yz})| \leq |\Delta(\h^X_x)| \leq (r-1) |E(\h^X_x)| \leq r^4\binom{n-4}{r-4}.
	\]
	Finally, $|E(\h^X_{xyz})| \leq \binom{n-3}{r-3}$. Thus,
	\[
	|E(\h)| \leq \binom{n-3}{r-3} + 3(r^4+r^3)\binom{n-4}{r-4}
	\]
	which is less than $10 \binom{n-5}{r-3}$ for $n \geq 2r^5$, a contradiction.
\end{proof}

In order to extend the technique used in this section to reprove our theorem for minimum positive co-degree $k \geq 4$ we would need to distinguish additional cases based on the transversal size of $\h_x^X$. Some of these cases can be addressed with Lemmas~\ref{tau-bound} and \ref{cross}, but probably new ideas will be needed.


\section*{Acknowledgements}
The authors would like to thank Dhruv Mubayi for pointing out reference~\cite{MZ2} and Lemma~\ref{k-core}. We also thank the anonymous referees for their careful reading of the manuscript and many helpful comments that improved the presentation.

\section*{Appendix}

We now give an improvement to Lemma~\ref{k-core} which we believe is of independent interest. Recall that $f(r,p)$ is the minimum integer such that an $r$-uniform hypergraph with $f(r,p)$ hyperedges contains a sunflower with $p$ petals.

\begin{lemma}\label{k-core-new}
Fix integers $r\geq 3$, $k \geq 1$ and $p \geq 1$ and let $n$ be large enough.
If $\mathcal{G}$ is an $r$-uniform $n$-vertex hypergraph with 
\[
|E(\mathcal{G})| \geq 2 r^{r-k} f(r,p r^{r-k}) \binom{n-k-1}{r-k-1},
\]
then  $\mathcal{G}$ contains a sunflower with $p$ petals and core of size at most $k$.  
\end{lemma}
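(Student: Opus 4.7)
The plan is to extract from $\mathcal{G}$ a sub-family $\mathcal{F}\subseteq E(\mathcal{G})$ whose pairwise intersections all have size at most $k$, of cardinality at least $f(r,p)$; the Erd\H{o}s--Rado Sunflower Lemma applied inside $\mathcal{F}$ then yields a sunflower $\mathcal{S}$ with $p$ petals, whose core, being contained in every pairwise intersection of petals, automatically has size at most $k$.

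\textbf{Key steps.} First, introduce the auxiliary ``bad-pair'' graph $H$ on vertex set $E(\mathcal{G})$, with $e\sim e'$ iff $|e\cap e'|\ge k+1$; independent sets of $H$ are precisely the candidate sub-families. Second, bound $|E(H)|$ by double counting through $(k+1)$-subsets of $V(\mathcal{G})$: writing $d(T)=|\{e\in E(\mathcal{G}):T\subseteq e\}|$ for such a subset $T$, each edge of $H$ arises from at least one common $(k+1)$-subset of its endpoints, and so
\[
|E(H)|\ \le\ \sum_{T\in\binom{V(\mathcal{G})}{k+1}}\binom{d(T)}{2}\ \le\ \tfrac{1}{2}\binom{r}{k+1}\binom{n-k-1}{r-k-1}|E(\mathcal{G})|,
\]
using the uniform bound $d(T)\le\binom{n-k-1}{r-k-1}$ and the identity $\sum_T d(T)=\binom{r}{k+1}|E(\mathcal{G})|$. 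Third, apply the Caro--Wei/Tur\'an inequality $\alpha(H)\ge |V(H)|/(1+\bar d(H))$; combining this with the hypothesis on $|E(\mathcal{G})|$ and the elementary estimate $\binom{r}{k+1}\le r^{r-k-1}$ yields, for $n$ sufficiently large,
\[
\alpha(H)\ \ge\ \frac{r^{r-k}\,f(r,pr^{r-k})}{\binom{r}{k+1}}\ \ge\ r\cdot f(r,pr^{r-k})\ \ge\ f(r,p).
\]
Finally, fix any independent set $\mathcal{F}$ of this size and apply the Sunflower Lemma inside $\mathcal{F}$ to harvest the desired sunflower.

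\textbf{Main obstacle.} The argument is intentionally light on ideas; the one place where real slack appears is the uniform bound $d(T)\le\binom{n-k-1}{r-k-1}$, which is far from tight whenever $\mathcal{G}$ is actually sunflower-free. This crude estimate is affordable only because the hypothesis carries the generous factor $f(r,pr^{r-k})$, which is much larger than the $f(r,p)$ actually required by the Sunflower Lemma; this slack absorbs the first-moment loss and delivers precisely the stated constant $2r^{r-k}$. Any attempt to tighten the constant further would require replacing the uniform codegree bound with a sunflower-freeness argument controlling how many $(k+1)$-subsets can simultaneously carry degree close to $\binom{n-k-1}{r-k-1}$.
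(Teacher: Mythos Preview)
Your argument is correct and takes a genuinely different route from the paper's proof. The paper proceeds by iteratively removing sunflowers with core size exceeding $k$, then argues that among the removed cores of some fixed size $s>k$ there must be an $s$-uniform sunflower whose own core has size $<s$; lifting petals from the associated $r$-uniform sunflowers produces a sunflower with strictly smaller core, contradicting the greedy choice in the removal procedure. Your approach sidesteps this contraction scheme entirely: by building the ``bad-pair'' graph $H$ and applying the Tur\'an/Caro--Wei bound, you directly extract a sub-family with all pairwise intersections of size at most $k$, and a single application of the Sunflower Lemma finishes. Your proof is shorter and more elementary; the paper's argument, on the other hand, is more structural and could in principle be pushed to yield information about the distribution of core sizes, not just existence.

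One remark worth making: your method actually gives a \emph{stronger} conclusion than the stated lemma. You only need $\alpha(H)\ge f(r,p)$, and your computation shows
\[
\alpha(H)\ \ge\ \frac{|E(\mathcal G)|}{1+\binom{r}{k+1}\binom{n-k-1}{r-k-1}},
\]
so the hypothesis $|E(\mathcal G)|\ge 2\binom{r}{k+1}f(r,p)\binom{n-k-1}{r-k-1}$ already suffices (for $n$ large enough that the ``$+1$'' is absorbed). Since $\binom{r}{k+1}\le r^{r-k-1}$ and $f(r,p)\le f(r,pr^{r-k})$, this is strictly better than the constant $2r^{r-k}f(r,pr^{r-k})$ in the lemma. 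In other words, the slack you identify in the ``Main obstacle'' paragraph is not a loss that is \emph{paid for} by the generous hypothesis; rather, your argument simply does not need the full strength of the hypothesis at all. You might rephrase that paragraph accordingly.
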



This replaces the value of $C=(pr2^r)^{2^r}$ in Lemma~\ref{k-core} with $C=2 r^{r-k} f(r,p r^{r-k})$ which is significantly smaller when using the bound on $f(r,pr^{r-k})$ from \cite{ALWZ}.

\begin{proof}
For the sake of a contradiction, suppose that $\mathcal{G}$ contains no sunflower with $p$ petals and core of size at most $k$.

Iteratively remove from $\mathcal{G}$ a sunflower $\mathcal{S}$ with exactly $p  r^{c(\mathcal{S})-k}$ petals such that at each step we choose a sunflower with minimum available core size $c(\mathcal{S})$. Let $t$ be the number of steps in this sunflower removal procedure. Note that $t$ grows with $n$ as at each step we remove at most $p r^{r-k}$ hyperedges from $\mathcal{G}$ and we only need constant number of hyperedges to guarantee the existence of a sunflower with $p  r^{c(\mathcal{S})-k}$ petals. In particular, we have
\[
t \geq \frac{|E(\mathcal{G})|-f(r,p  r^{r-k})}{p  r^{r-k}} \geq \frac{|E(\mathcal{G})|}{2pr^{r-k}}
\]
for $n$ large enough.

The core of each removed sunflower is of size at least $k+1$ and at most $r-1$. Therefore, there is some integer $s$ such that there are at least $t /r$ cores of size $s$ among the removed sunflowers.
 Some of these cores may be identical. Let us compute the maximum multiplicity of a core $Y$. There are at most $\binom{n-|Y|}{r-|Y|}$ hyperedges containing $Y$ and each removed sunflower with core $Y$ has exactly $p r^{|Y|-k}$ hyperedges. Therefore,
 the maximum multiplicity of a core $Y$ is at most
\[
\frac{1}{p  r^{|Y|-k}}\binom{n-|Y|}{r-|Y|} \leq \frac{1}{p  r} \binom{n-k-1}{r-k-1}
\]
for $n\geq r$.
Therefore, there is a collection of at least 
\[
(t/r)  p  r \binom{n-k-1}{r-k-1}^{-1} \geq p  \frac{|E(\mathcal{G})|}{2pr^{r-k}}  \binom{n-k-1}{r-k-1}^{-1} \geq f(r,p  r^{r-k})
\]
distinct cores of size $s$. Let $Y_1,Y_2,\dots, Y_q$ be these cores and let $\mathcal{S}_i$ be the sunflower with  core $Y_i$ for $i=1,2,\dots,q$. Note that each of these sunflowers has exactly $p  r ^{s-k}$ petals.

Let $t$ be the first step in the sunflower removal procedure in which a sunflower with core of size $s$ is chosen to be removed. This implies that all later cores are of size at least $s$. Now we will show that there is a sunflower $\mathcal{B}$ with core of size less than $s$ and $p r^{c(\mathcal{B})-k}$ petals among the hyperedges in the sunflowers $\mathcal{S}_1,\mathcal{S}_2,\dots, \mathcal{S}_q$. Before removing the sunflower in step $t$, all hyperedges of the sunflowers $\mathcal{S}_1,\mathcal{S}_2,\dots, \mathcal{S}_q$ are still in $\mathcal{H}$. Therefore, the sunflower $\mathcal{B}$ with core of size less than $s$ could be chosen in step $t$, this will contradict the choice of $t$.

We may think of the $s$-sets $Y_1,\dots, Y_q$ as an $s$-uniform hypergraph on the vertex set of $\mathcal{H}$.
As $q \geq f(r,p  r^{r-k})\geq f(s,p  r^{r-k}) \geq f(s,pr^{s-k})$, the $s$-sets $Y_1,\dots, Y_q$ contain an $s$-uniform sunflower $\mathcal{A}$ with $p  r^{s-k}$ petals and core $Y^*$ of size less than $s$. By relabelling, we may suppose that $Y_i$ is a member of $\mathcal{A}$ for $i=1,2,\dots, p  r^{s-r}$. Note that the petals $Y_i \setminus Y^*$ of $\mathcal{A}$ are pairwise disjoint by definition. The sunflower $\mathcal{A}$ is not in the hypergraph $\mathcal{H}$ as it is $s$-uniform. However, each hyperedge of $\mathcal{A}$ is the core of some sunflower $\mathcal{S}_i$ in $\mathcal{H}$. Therefore, we will use the members of $\mathcal{A}$ to identify an $r$-uniform sunflower $\mathcal{B}$ with core $Y^*$ in $\mathcal{H}$. The main idea will be carefully choose a petal from each sunflower $\mathcal{S}_i$ whose core is a member of $\mathcal{A}$. To this end, define $\mathcal{B}$ as follows:

First pick any hyperedge of $\mathcal{S}_1$; denote it by $h_1$. Now suppose we have chosen $\ell$ hyperedges $h_1,h_2,\dots, h_\ell$ that form a sunflower with core $Y^*$. The union of these hyperedges contains $\ell(r-|Y^*|)$ vertices outside of $Y^*$. Therefore, as long as 
\begin{equation}\label{ell-bound}
p  r ^{s-k}> \ell(r-|Y^*|),
\end{equation}
there is a petal $Y_i \setminus Y^*$ of $\mathcal{A}$ that is disjoint from each of the hyperedges $h_1,h_2,\dots, h_\ell$. The corresponding sunflower $\mathcal{S}_i$ with core $Y_i$ has 
\[
p  r ^{s-k} > \ell ({r-|Y^*|})
\]
petals by (\ref{ell-bound}). Therefore, there is a petal $P$ of $\mathcal{S}_i$ that is also disjoint from the hyperedges in $h_1,h_2,\dots, h_\ell$. Let $h_{\ell+1}$ be the hyperedge $P \cup Y_i$. Now we have a sunflower with $\ell+1$ petals and core $Y^*$. We may repeat this procedure as long as $\ell$ satisfies (\ref{ell-bound}), i.e., until $\ell = p  r^{s-k-1}$ . This implies that the number of petals in sunflower $\mathcal{B}$ is at least 
\[
p  r^{s-k-1}.
\]
As $\mathcal{B}$ has core $Y^*$ of size $c(\mathcal{B}) < s$ we have a contradiction to the choice of sunflower in step $t$.
\end{proof}

\end{document}